\def\d{\mathrm{d}}
\def\N{\mathbb{N}}
\def\R{\mathbb{R}}
\def\m1{{I\!\!M}}
\renewcommand{\to}{\rightarrow}
\newcommand{\pa}{\partial}
\newcommand{\ino}{\int_{\Omega}}
\newcommand{\ainf}{\mbox{as\;}\;n\to+\infty}
\newcommand{\fo}{\forall}
\newcommand{\rife}[1]{(\ref{#1})}
\newcommand{\ov}[1]{\overline{#1}}
\newcommand{\un}[1]{\underline{#1}}
\newcommand{\scp}{\scriptstyle}
\newcommand{\sscp}{\scriptscriptstyle}
\newcommand{\dsp}{\displaystyle}
\renewcommand{\dfrac}{\displaystyle\frac}
\newcommand{\finedim}{\hspace{\fill}$\square$}
\newcommand{\intbar}{\mathop{\int\makebox(-15.5,0){\rule[6pt]{.7em}{0.3pt}}%
\kern-6pt}\nolimits}
\newcommand{\ii}{\infty}
\newcommand{\eps}{\varepsilon}
\newcommand{\dt}{\delta}
\newcommand{\sg}{\sigma}
\newcommand{\om}{\Omega}
\newcommand{\lm}{\lambda}
\newcommand{\omb}{\ov{\om}}
\newcommand{\e}[1]{{\,\dsp e^{\dsp #1}}}
\newcommand{\rl}{\mbox{\Large \textrho}_{\!\sscp \lm}}
\newcommand{\rh}{\mbox{\Large \textrho}}
\newcommand{\pl}{\psi_{\sscp \lm}}
\newcommand{\ple}{\psi_{\sscp \le}}
\renewcommand{\le}{\lm(E)}
\newcommand{\ue}{u_{\scp \eps}}
\newcommand{\we}{w_{\sscp (\eps)}}
\newcommand{\lep}{\lm_{\scp \eps}}
\newcommand{\prl}{{\textbf{(}\mathbf P_{\mathbf \lm}\textbf{)}}}
\newtheorem{theorem}{Theorem}[section]
\newtheorem{proposition}[theorem]{Proposition}
\newtheorem{lemma}[theorem]{Lemma}
\newtheorem{corollary}[theorem]{Corollary}
\newtheorem{remark}[theorem]{Remark}
\newtheorem{definition}[theorem]{Definition}
\newcommand{\brm}{\begin{remark}\rm}
\newcommand{\erm}{\end{remark}}
\newcommand{\bdf}{\begin{definition}\rm}
\newcommand{\edf}{\end{definition}}
\newcommand{\bte}{\begin{theorem}}
\newcommand{\ete}{\end{theorem}}
\newcommand{\bpr}{\begin{proposition}}
\newcommand{\epr}{\end{proposition}}
\newcommand{\ble}{\begin{lemma}}
\newcommand{\ele}{\end{lemma}}
\newcommand{\bco}{\begin{corollary}}
\newcommand{\eco}{\end{corollary}}
\newcommand{\beq}{\begin{equation}}
\newcommand{\eeq}{\end{equation}}
\newcommand{\bdm}{\begin{displaymath}}
\newcommand{\edm}{\end{displaymath}}
\newcommand{\graf}[1]{\left\{\begin{array}{ll}#1\end{array}\right.}
\def\sideremark#1{\ifvmode\leavevmode\fi\vadjust{\vbox to0pt{\vss
 \hbox to 0pt{\hskip\hsize\hskip1em \vbox{\hsize2.1cm\tiny\raggedright\pretolerance10000 \noindent #1\hfill}\hss}\vbox to15pt{\vfil}\vss}}}
\begin{document}
\numberwithin{equation}{section}
\parindent=0pt
\hfuzz=2pt
\frenchspacing

\title[Mean field equation and Onsager theory of 2D turbulence]{Non degeneracy, Mean Field Equations and the Onsager theory of 2D turbulence.}

\author{Daniele Bartolucci$^{(\dag)}$}
 \address{Daniele Bartolucci, Department of Mathematics, University of Rome {\it "Tor Vergata"},  Via della ricerca scientifica n.1, 00133 Roma,
Italy.}
\email{bartoluc@mat.uniroma2.it}

\author{Aleks Jevnikar}
 \address{Aleks Jevnikar, Department of Mathematics, University of Rome {\it "Tor Vergata"},  Via della ricerca scientifica n.1, 00133 Roma,
Italy.}
\email{jevnikar@mat.uniroma2.it}

\author{Youngae Lee}
\address{Youngae ~Lee,~National Institute for Mathematical Sciences, 70 Yuseong-daero 1689 beon-gil, Yuseong-gu, Daejeon, 34047, Republic of Korea}
\email{youngaelee0531@gmail.com}

\author{ Wen Yang}
\address{ Wen ~Yang,~Department of Applied Mathematics, Hong Kong Polytechnic University, Hung Hom, Kowloon, Hong Kong}
\email{math.yangwen@gmail.com}

\thanks{2010 \textit{Mathematics Subject classification:} 35B32, 35J61, 35Q35, 35Q82, 76M30,
82D15.}

%\thanks{$^{(1)}$Daniele Bartolucci, Department of Mathematics, University of Rome {\it "Tor Vergata"}, \\  Via della ricerca scientifica n.1, 00133 Roma, Italy. e-mail:bartoluc@mat.uniroma2.it}

\thanks{$^{(\dag)}$Research partially supported by FIRB project "{\em
Analysis and Beyond}",  by PRIN project 2012, ERC PE1\_11,
"{\em Variational and perturbative aspects in nonlinear differential problems}", and by the Consolidate the Foundations
project 2015 (sponsored by Univ. of Rome "Tor Vergata"),  ERC PE1\_11,
"{\em Nonlinear Differential Problems and their Applications}"}

\begin{abstract}
The understanding of some large energy, negative specific heat states in the Onsager description of 2D turbulence, seems to require
the analysis of a subtle open problem about bubbling solutions of the mean field equation. Motivated by this application we prove that, 
under suitable non degeneracy assumptions on the associated $m$-vortex Hamiltonian, the $m$-point bubbling solutions of the mean
field equation are non degenerate as well. Then we deduce that the Onsager mean field equilibrium entropy is smooth and strictly convex in the 
high energy regime on domains of second kind.
\end{abstract}
\maketitle
{\bf Keywords}: Mean field equations, non degeneracy, negative specific heat states.

%\tableofcontents
\setcounter{section}{0}

\section{Introduction}
\setcounter{equation}{0}
The understanding of some large energy, negative specific heat states in the Onsager description of 2D turbulence, seems to require
the analysis of a subtle open problem about bubbling solutions of the mean field equation. 
Motivated by this application we prove that, under suitable non degeneracy assumptions on the associated $m$-vortex Hamiltonian, 
the $m$-point bubbling solutions of the mean field equation are non degenerate as well. Then we deduce that the Onsager mean field 
equilibrium entropy is smooth and strictly convex in the high energy regime on domains of second kind.

\subsection{Motivation.}
In a celebrated paper \cite{On} L. Onsager derived a statistical mechanics description of large vortex structures observed in two 
dimensional turbulence. He analysed the microcanonical ensemble relative to the Hamiltonian $N-$vortex model and 
concluded (among other things) that the maximum entropy (thermodynamic equilibrium) states could be realized by 
highly concentrated configurations, 
{where} like vortices attract each other. These highly concentrated states are observed in a negative temperature regime, 
meaning that, above a certain energy $E_0\in (0,+\ii)$ (see \rife{e0} below), the equilibrium entropy $S(E)$ is decreasing as a function of the energy $E$.
Those physical arguments has been later turned into rigorous proofs \cite{clmp1}, \cite{clmp2}, \cite{K}, \cite{KL}. We refer to \cite{ESp}, \cite{ESr} 
and references therein for a complete discussion about the Onsager theory and a more detailed account of the impact of those ideas. In particular, 
the rigorous analysis of the mean field canonical and microcanonical models was worked out in \cite{clmp2}, where it was shown that the mean field
thermodynamics in the energy range $E< E_{8\pi}$ is well described in terms of the unique \cite{BLin3},\cite{CCL},\cite{suz}, solutions with $\lm<8\pi$ 
of the mean field equation,
\begin{equation*}
\begin{cases}
-\Delta\psi=\dfrac{\dsp e^{\lambda\psi}}{\dsp \int_{\Omega}e^{\lambda\psi}}~&\mathrm{in}~\Omega,\\
\psi=0~&\mathrm{on}~\partial\Omega,
\end{cases}
\eqno \prl
\end{equation*}
where $\om$ is any smooth, bounded and simply connected domain and $E_{8\pi}\in (E_0,+\ii]$ is a critical energy whose value depends in a crucial way by $\om$, see Definition \ref{def1.3} below.  Here $\psi$ is the stream function of the flow,
$$\rl(\psi):=\dfrac{\dsp \e{\lm \psi}}{\dsp \ino \e{\lm \psi}},$$
is the vorticity density and $-\lm=\beta=\frac{1}{\kappa T_{\rm stat}}$ is minus the inverse statistical temperature, $\kappa$ being the Boltzmann constant. This result has been more recently generalized to cover the case of any smooth, bounded and connected domain in \cite{BLin3}.
\bdf
\label{def1}
{\it Let $\om\subset\R^2$ be a smooth and bounded domain. We say that $\om$ is of {\bf second kind} if {\rm $\prl$} admits a solution for $\lm=8\pi$. Otherwise $\om$ is said to be of {\bf first kind}.}
\edf

It follows from the results in \cite{CCL} and  \cite{BLin3} that $\om$ is of second kind if and only if $E_{8\pi}<+\ii$, see {Definition \ref{def1.3}} for the definition of $E_{8\pi}.$  As first proved in \cite{clmp2}, for $E\geq E_{8\pi}$ the Onsager intuitions lead to another amazing result, which is the non concavity of the equilibrium entropy. Although the statistical temperature in this model has nothing to do with the physical temperature, this is still a very interesting phenomenon since, if the entropy $S(E)$ is convex in a certain interval, then the system surprisingly "cools down" when the energy increases in that range. In other words the (statistical) specific heat is negative. With the exception of the results in \cite{clmp2} and more recently in \cite{B2}, we do not know of any progress in the rigorous analysis of this problem. Actually, it has been shown in \cite{clmp2} that $S(E)$ is not concave in $(E_{8\pi},+\ii)$ while, under a suitable set of assumptions, it has been shown in \cite{B2} that it is {strictly convex} in $(E_{*},\ii)$ for some $E_*>E_{8\pi}$ on any strictly star-shaped domain of second kind. One of our aims is to remove the assumptions in \cite{B2} and prove that in fact $S(E)$ is \un{convex} on any convex domain of second kind for any $E>E_{8\pi}$ large enough.

\medskip

The definition of domains of first/second kind was first introduced in \cite{clmp1} with an equivalent but different formulation.
We refer to \cite{CCL} and \cite{BLin3} for a complete discussion about the characterization of domains of first/second kind and related examples.

\brm {\it It is well known that any disk,  say $B_R=B_R(0)$, is of first kind and that in this case {\mbox{\rm $\prl$}} admits a solution if and only if $\lm<8\pi$. 
Any regular polygon is of first kind \cite{CCL}. However, there exist domains of first kind with non trivial topology where {\mbox{\rm $\prl$}}
admits solutions also for $\lm>8\pi$. For example $\om=B_R\setminus B_r(x_0)$, with $x_0\neq 0$ is of first kind if $r$ is small enough \cite{BLin3}. 
In this case it is well known that for any $N\geq 2$ there are solutions concentrating {\rm (}see \eqref{conc}{\rm )} at $N$ distinct points as $\lm\to 8\pi N$ \cite{EGP, KMdP}, as well as other solutions for any $\lm\notin 8\pi\N$ \cite{BdM2}, \cite{CLin2}.
\medskip

\indent It has been proved in \cite{BdM2} that there exists a universal constant $I_c>4\pi$ such that any convex domain whose isoperimetric ratio $I(\om)$ satisfies $I(\om)>I_c$ is of second kind. Also, if $Q_{a,b}$ is a rectangle whose side are $1\leq a\leq b<+\ii$ then there {exists $\eta_c\in(0,1)$} such that $Q_{a,b}$ is of second kind if and only if $\frac{a}{b}< \eta_c$, see \cite{CCL}.}
\erm

\bigskip

Let us define,
$$\mathcal{P}_{\sscp \om}=\left\{\rh\in L^{1}(\om)\,|\,\rh\geq 0\;\mbox{a.e. in}\;\om \right\},$$
and the entropy, energy and total vorticity of the density $\rh\in \mathcal{P}_{\sscp \om}$ as,
$$\mathfrak{S}(\rh)=-\ino \rh\log(\rh),\quad\mathcal{E}(\rh)=\frac12 \ino \rh G[\rh],\quad \mathfrak{M}(\rh)=\ino\rh,$$
respectively, where,
$$G[\rh](x)=\ino G(x,y)\rh(y)\,dy,\;x\in\om,$$
and $G=G_\om$ is the Green's function of $-\Delta$ with Dirichlet boundary conditions.

\bigskip

It has been shown in \cite{CCL} and  \cite{BLin3} that, if for $\lm=8\pi$ a solution of $\prl$ exists, say $\psi_{\sscp 8\pi}$, then it is unique. In particular it has been shown there that $\psi_{\sscp 8\pi}$ exists if and only if it is the uniform limit of the unique solutions of $\prl$ with $\lm<8\pi$. In particular one can see that $\mathcal{E}(\rl(\pl))\to +\ii$ as $\lm\to 8\pi^-$ if and only if $\om$ is of first kind. Consequently the following definition is well posed,

\bdf
\label{def1.3}{\it If $\om$ is of {second kind}, then we define $E_{8\pi}=\mathcal{E}(\mbox{\rm $\rh$}_{8\pi}(\psi_{\sscp 8\pi}))$, while if $\om$ is of {first kind} we set $E_{8\pi}=+\ii$.}
\edf

For any  $E\in(0,+\infty)$, let us consider the Microcanonical Variational Principle,
$$S(E)=\sup \left\{ \mathfrak{S}(\rh),\; \rh \in \mathcal{M}_E \right\},\quad
\mathcal{M}_E=\left\{\rh\in\mathcal{P}_{\sscp \om}\,|\,\mathcal{E}(\rh)=E,\;\;\mathfrak{M}(\rh)=1\right\}.\eqno \mbox{\bf (MVP)}$$

Among many other things which we will not discuss here, the following facts has been proved in \cite{clmp2}:
\medskip

\textbf{(MVP1)}: for each $E> 0$ there exist $\le\in\R$ and a solution $\pl=\ple$ of $\prl$ such that $\rh_{\sscp \le}(\ple)$ solves the ${\bf (MVP)}$, $S(E)=\mathfrak{S}\left(\rh_{\sscp \le}(\ple)\right)$. In particular, if $\rh$ solves the ${\bf (MVP)}$ for a certain energy $E$, then $\psi=G[\rh]$ solves $\prl$ for some $\lm=\le$, where $\pl=\psi$ and $\rh=\rl(\pl)$.
\medskip

\textbf{(MVP2)}: $S(E)$ is continuous and
\beq\label{conc}
\rh_{\sscp \lm(E)}(\ple)\rightharpoonup \dt_{x_0},\quad \mbox{as}\quad E\to +\infty,
\eeq
where $x_0$ is a maximum point of the Robin function $\gamma_{\sscp \om}(x):=R(x,x)$, and $R(x,y)=G(x,y)+\frac{1}{2\pi}\log(|x-y|)$.
\medskip

{For the sake of completeness let us recall that, by the Jensen inequality, we have,  $\mathfrak{S}(\rl(\psi_\lambda))\leq\log(|\om|),$  where the equality holds if and only if  $\rl(\psi_\lambda)(x)\equiv \mbox{\Large \textrho}_{0}(\psi_0)(x)=\frac{1}{|\om|},\,\fo x\in\ov{\om}$.  Therefore $\rh_0$ is the unique solution of the ${\bf (MVP)}$ at energy $E=E_0$, that is,}
%For the sake of completeness let us recall that, by the Jensen inequality, we have, $\mathfrak{S}(\rl)\leq\log(|\om|),$ where the equality holds if and only if $\rl(x)\equiv \rh_0(x)=\frac{1}{|\om|},\,\fo x\in\ov{\om}$. Therefore $\rh_0$ is the unique solution of the ${\bf (MVP)}$ at energy $E=E_0$, that is,
\beq
\label{e0}
E_0:=\mathcal{E}\left(\rl(\pl)\right)\left.\right|_{\lm=0}=\mathcal{E}\left(\rh_0\right)=\frac{1}{{2}|\om|^2}\ino \ino G(x,y) dx dy.
\eeq
Moreover, it has been proved in \cite{clmp2} that if $\om$ is simply connected, then for each $E<E_{8\pi}$ there exists $\lm=\lm(E)\in (-\infty,8\pi)$ 
such that $\lm(E)$ is well defined, continuous and monotonic increasing, $\lm(E)\nearrow 8\pi$ as 
$E \nearrow  E_{8\pi}$ and $S(E)=\mathfrak{S}\left(\rh_{\sscp \le}(\ple)\right)$ is smooth, concave and $\frac{dS(E)}{dE}=-\lm(E)$ in $(0,E_{8\pi})$.  
This result has been recently generalized to cover the case where $\om$ is any bounded and connected domain in \cite{BLin3}. 
In particular, see \cite{clmp2}, $S(E)$ is strictly decreasing for $E>E_0$ 
and if  $\om$ is a simply connected domain of second kind, 
then there exists constants $C_1<C_2$ such that $-8\pi E +C_1\leq S(E)\leq -8\pi E +C_2$, for any $E>E_{8\pi}$ and $S(E)$ \underline{is not concave} 
for $E>E_{8\pi}$.

\medskip

Our first main result is the following:
\bte
\label{thm1}
Let $\om$ be a convex domain of second kind.  There exists $E_{*}> E_{8\pi}$ such that $S(E)$ is smooth and strictly convex in $[E_{*},+\ii)$. In particular,
$$ \beta(E):=\frac{dS(E)}{d E}=-\lm(E),\;E\in[E_{*},+\ii), $$
where $\lm(E)$ is smooth, strictly decreasing and $\lm(E)\searrow 8\pi^+$ as $E\to +\ii$.
\ete

\brm{\it Actually the proof shows that a slightly stronger result holds, that is, the claim of Theorem \ref{thm1} is true on any strictly starshaped domain of second kind such that $\gamma_{\sscp \om}$ has a unique and non degenerate maximum point.}
\erm

Even with the aid of the non degeneracy and uniqueness results derived here (see Theorem~\ref{th1.1} below) and in \cite{BJLY} 
{(see Theorem 3.B in section \ref{sec5} below),} the proof of Theorem \ref{thm1} is not straightforward, because the monotonicity of 
the Lagrange multiplier $\lm(E)$ is not easy to check. We succeed in finding a solution to this problem by showing that 
$\lm(E)=\left.\lm_\eps\right|_{\eps(E)}$ where $\lm_\eps$ is a monotone function constructed by using the so called Gelfand equation (see \rife{peps}) 
and a related non degeneracy property \cite{GlGr}, see {Theorem 3.C} in section \ref{sec5}. The proof of this part also relies on another delicate result about the characterization of domains of second kind \cite{BLin3}. This workaround eventually solves the problem since it turns out that the energy as a function of $\eps$ has the "right" monotonicity property, see Lemma \ref{enrgeps}.

\medskip

\subsection{The mean field equation.}
The proof of Theorem \ref{thm1} is based on various results, see section \ref{sec5}, of independent interest about the mean field equation,
\beq\label{1lm}
\begin{cases}
-\Delta u_{\lambda}=\lambda\dfrac{\dsp he^{u_{\lambda}}}{\dsp \int_{\Omega}he^{u_{\lambda}}}~&\mathrm{in}~\Omega,\\
u_{\lambda}=0~&\mathrm{on}~\partial\Omega,
\end{cases}
\eeq
and about the Gelfand equation \rife{peps}, where $\om$ is a smooth and bounded domain, $h(x)=\hat h(x)e^{-4\pi\sum_{i=1}^\ell \alpha_iG(x,p_i)}\geq0$, the $p_i$'s are distinct points in $\om$, $\alpha_i>-1$ for any $i=1,\cdots,\ell$, $\hat h$ is a positive smooth function in $\ov{\om}$ and $G(x,p)$ is the Green function satisfying,
\begin{equation*}
-\Delta G(x,p)=\delta_p~\mathrm{in}~\Omega,\quad G(x,p)=0~\mathrm{on}~\partial\Omega.
\end{equation*}

Because of the many motivations in pure and applied mathematics \cite{Aub,Ban,bav,beb,clmp1,Gel,On,suzC,T,Troy,yang}, 
a lot of work has been done to understand these pde's and we just mention few results \cite{BdM2,BdMM,bt2,CCL,CLin5,EGP,Fang,GM1,yy,linwang,Mal2,suz,tar} 
and the references quoted therein. Among many other things, the interesting properties of these problems are the lack of compactness \cite{bt,bm,yy}, 
which in turn causes either non existence or non uniqueness of solutions of \rife{1lm} with $\lm>8\pi$ \cite{CLin2,dem2, Mal1}, see also \cite{B2,clmp2}.
As a consequence, in general, the global bifurcation diagram could be very complicated and in particular solutions may be degenerate. 
Here we say that a solution of \rife{1lm} is degenerate if the corresponding linearized problem (see \rife{1.8l} below) admits a non trivial solution.
This is why it is important to understand which are the regions in the bifurcation diagram where one can recover uniqueness and {nondegeneracy.}

\bdf
\label{def1.6}
{\it A sequence of solutions $u_n:=u_{\sscp \lm_n}$ of \eqref{1lm} is said to be an $m$-bubbling sequence if,
$$\lm_n\dfrac{\dsp h\e{u_n}}{ \dsp  \ino h \e{u_n}}\rightharpoonup 8\pi \sum\limits_{j=1}^m \dt_{q_j},\;\mbox{as}\;n\to +\ii,$$
weakly in the sense of measures in $\om$, where $\{q_1,\cdots, q_m\}\subset \om$ are $m$ distinct points satisfying $\{q_1,\cdots, q_m\}\cap \{p_1,\cdots,p_\ell\}=\emptyset$. The points $q_j$'s are said to be the blow up points and $\{q_1,\cdots, q_m\}$ the blow up set.}
\edf

\brm
\label{rem1.7} {\it It is well known \cite{bm,CCL,yy}, that if $u_n$ is an $m$-bubbling sequence, then, as far as the domain is not too irregular, see Remark 
\ref{rem1.9}, then 
necessarily $\lm_n\to 8\pi m$, as $n\to +\ii$.}
\erm

The uniqueness for $\lm\leq 8\pi$ is a delicate problem which has been discussed in \cite{bl,BLin3, BJLY, CCL, suz}. On the other side, we do not know of any general {nondegeneracy} result for bubbling solutions of \rife{1lm}. Actually  
the nondegeneracy of $m$-bubbling solutions of the Gelfand problem (see \rife{peps} below) was first established 
in \cite{GlGr} for $m=1$ and then in \cite{GOS} for $m\geq 1$. It seems however that our problem is rather different, since our linearized 
equation contains an additional term, see \rife{1.8l}. Therefore, after a suitable translation, we would end up with a solution of an equation 
with vanishing weighted mean value and constant and unknown value at the boundary, unlike the Gelfand problem where the solution satisfies the 
natural Dirichlet boundary conditions. Let us consider a generic $m$-bubbling sequence which satisfies,
\begin{equation}
\label{1.1l}
\graf{\Delta u_n+\lambda_n\dfrac{\dsp h\e{u_n}}{\dsp \int_{\Omega} h\e{u_n}} =0~&\mathrm{in}~\Omega,\\
u_n=0~&\mathrm{on}~\partial\Omega,}
\end{equation}
for any $n\in\N$, and prove that $u_n$ is non-degenerate, provided that $n$ is sufficiently large, and $h$ and $\om$ satisfy some
other non-degeneracy assumptions. Let
$$R(x,y)=G(x,y)+\frac{1}{2\pi}\log|x-y|,$$
be the regular part of the Green function $G(x,y)$. For $\mathbf{q}=(q_1,\cdots,q_m)\in\om\times\cdots\times\om,$
we set
\begin{equation}
\label{1.2}
G_j^*(x)=8\pi R(x,q_j)+8\pi\sum_{l\neq j}^{1,\dots,m}G(x,q_l),
\end{equation}
\begin{equation}
\label{1.3}
\ell(\mathbf{q})=\sum_{j=1}^m[\Delta \log h(q_j) ]h(q_j)e^{G_j^*(q_j)},
\end{equation}
and
\begin{equation}
\label{1.4}
f_{\mathbf{q},j}(x)=8\pi\Big[R(x,q_j)- R(q_j,q_j)+\sum^{1,\dots,m}_{l\neq j}(G(x,q_l)-G(q_j,q_l))\Big]+\log \frac{h(x)}{ h(q_j)}.
\end{equation}

We will denote by $B_r(q)$ the ball of radius $r$ centred at $q\in\Omega.$ For the case $m\geq2$ we fix a constant $r_0\in(0,\frac12)$ and a family of open sets
$\Omega_j$ satisfying $\Omega_l\cap\Omega_j=\emptyset$ if $l\neq j$, $\bigcup_{j=1}^m\overline \om_j=\overline\om$,
$B_{2r_0}(q_j)\subset\Omega_j,~j=1,\cdots,m$. Then, let us define,
\begin{equation}
\label{1.5}
D(\mathbf{q})=\lim_{r\to0}\sum_{j=1}^m h(q_j)e^{G_j^*(q_j)}\Big(\dsp\int_{\Omega_j\setminus B_{r_j}(q_j)}
e^{\Phi_j(x,\mathbf{q})} \mathrm{d}x-\frac{\pi}{r_j^2}\Big),
\end{equation}
where $\Omega_1=\Omega$ if $m=1$, $r_j=r\sqrt{8h(q_j)e^{G_j^*}(q_j)}$ and
\begin{equation}
\label{1.6}
\Phi_j(x,\mathbf{q})=\sum_{l=1}^m 8\pi  G(x,q_l)-G_j^*(q_j)+\log h(x)-\log h(q_j).
\end{equation}
The quantity $D(\mathbf{q})$ was introduced first in \cite{CCL, clw}. For $(x_1,\cdots,x_m)\in\om\times\cdots\times\om$ we also define,
\begin{equation}
\label{1.7}
f_m(x_1,x_2,\cdots,x_m)=\sum_{j=1}^{m}\Big[\log(h(x_j))+4\pi R(x_j,x_j)\Big]+4\pi\sum_{l\neq j}^{1,\cdots,m}G(x_l,x_j),
\end{equation}
and let $D_\Omega^2f_m$ be its Hessian matrix on $\Omega$. The function $f_m$ is also known in literature as the $m$-vortex Hamiltonian \cite{New}. We consider the linearized problem relative to \eqref{1.1l},
\begin{equation}
\label{1.8l}
\graf{\Delta\phi+\lambda_n\dfrac{\dsp he^{u_n}}{\dsp \int_{\Omega}he^{u_n}\d y}\left(\phi-\dfrac{\dsp\int_{\Omega}he^{u_n}\phi\,\d y}{\dsp\int_{\Omega}he^{u_n}\d y}\right)=0~&\mathrm{in}~\Omega,\\
\phi=0~&\mathrm{on}~\partial\Omega.}
\end{equation}

Our second main result is the following,

\begin{theorem}
\label{th1.1}
Let $u_n$ be an $m$-bubbling sequence of \eqref{1.1l}
with blow up set $\{q_1,\cdots,q_m\}\cap\{p_1,\cdots,p_\ell\}=\emptyset$, where
$\mathbf{q}=(q_1,\cdots,q_m)$ is a critical point of $f_m$ and let $det(D_{\Omega}^2f_m(\mathbf{q}))\neq0$. Assume that either,
\begin{enumerate}
  \item [(1)] $\ell(\mathbf{q})\neq0$, or,
  \item [(2)] $\ell(\mathbf{q})=0$ and $D(\mathbf{q})\neq0$.
\end{enumerate}
Then there exists $n_0\geq1$ such that, for any $n\geq n_0$, \eqref{1.8l} admits only the trivial solution $\phi\equiv 0$.
\end{theorem}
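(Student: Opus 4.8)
The plan is to argue by contradiction: suppose that along a subsequence there exist nontrivial solutions $\phi_n$ of \eqref{1.8l}, normalized so that $\|\phi_n\|_{L^\infty(\Omega)}=1$. The first step is to understand the concentration behaviour of $\phi_n$ near each blow-up point $q_j$. Using the standard bubbling description of $u_n$ (after \cite{bm,yy,CCL}), near $q_j$ we have $u_n \approx U_{n,j} + $ (regular part), where $U_{n,j}$ are the standard rescaled entire solutions of $-\Delta U = e^U$ on $\R^2$. Rescaling $\phi_n$ around $q_j$ at the same rate $\delta_{n,j}$, one obtains in the limit a bounded solution $\varphi_j$ of the linearized Liouville equation
\begin{equation*}
\Delta \varphi + \frac{8}{(1+|y|^2)^2}\varphi = 0 \quad \text{in } \R^2,
\end{equation*}
whose bounded solutions are spanned by $\frac{1-|y|^2}{1+|y|^2}$ and $\frac{y_i}{1+|y|^2}$, $i=1,2$. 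The extra nonlocal term $\lambda_n\frac{\int h e^{u_n}\phi_n}{\int h e^{u_n}}$ in \eqref{1.8l} is a constant $c_n$; one must show it plays no role at leading order in the rescaled limit (it contributes a lower-order term since the measure concentrates). So in each bubble $\phi_n$ looks like a linear combination $b_{0,j}\,\frac{1-|y|^2}{1+|y|^2} + \sum_i b_{i,j}\,\frac{y_i}{1+|y|^2}$ plus error.

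The second step is to extract the constraints on the coefficients $\{b_{0,j}, b_{i,j}\}$ from the global structure of the problem. Away from the blow-up points, $\phi_n$ converges (after subtracting the constant $c_n$, or suitably) to a solution of $-\Delta \Phi_0 = \sum_j \mu_j \delta_{q_j}$ with $\Phi_0 = $ const on $\partial\Omega$ — this is where the discrepancy with the Gelfand problem enters, as the authors remark. Matching the inner and outer expansions, the coefficients $b_{i,j}$ ($i=1,2$), which govern the "translation" modes, are controlled by the Hessian $D_\Omega^2 f_m(\mathbf q)$: the leading-order matching condition forces $D_\Omega^2 f_m(\mathbf q)\,(b_{1,\cdot},b_{2,\cdot})^T = 0$, so the nondegeneracy hypothesis $\det(D_\Omega^2 f_m(\mathbf q))\neq 0$ kills all the $b_{i,j}$. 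Then one is left with the "dilation" coefficients $b_{0,j}$, and this is exactly where the quantities $\ell(\mathbf q)$ and $D(\mathbf q)$ come in: a next-order (Pohozaev-type / integral) identity, obtained by testing the equation against appropriate functions and carefully expanding, yields a scalar relation of the form $\ell(\mathbf q)\cdot(\text{something involving } b_{0,j}) = 0$ in case (1), or (when $\ell(\mathbf q)=0$) a relation involving $D(\mathbf q)$ in case (2). Either hypothesis then forces $b_{0,j}=0$ for all $j$.

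The third step is the standard "improvement of convergence": once all the leading coefficients vanish, a refined analysis of the error terms shows $\|\phi_n\|_{L^\infty}=o(1)$, contradicting the normalization $\|\phi_n\|_{L^\infty}=1$. This typically uses a blow-up/linear-estimate argument or a careful Green's representation bounding $\phi_n$ in terms of its projections onto the kernel directions, which we have just shown to be $o(1)$.

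The main obstacle, and the genuinely new difficulty compared to \cite{GlGr,GOS}, is the presence of the nonlocal term and the fact that, after translating to normalize the boundary data, $\phi_n$ does \emph{not} satisfy homogeneous Dirichlet conditions but rather has an unknown constant boundary value together with a vanishing \emph{weighted} mean value $\int h e^{u_n}\phi_n = 0$ is replaced by the nonlocal coupling. Handling this requires a careful decomposition $\phi_n = \phi_n^{(1)} + c_n$ (or similar), precise control of the constant $c_n$ in terms of the bubble profiles, and tracking how it enters both the inner limit and the Pohozaev identities that produce $\ell(\mathbf q)$ and $D(\mathbf q)$. I expect the bookkeeping in the matched asymptotics — especially isolating the coefficient of $\ell(\mathbf q)$ in the case-(1) identity and, in the degenerate case, recovering $D(\mathbf q)$ at the correct order — to be the technical heart of the argument.
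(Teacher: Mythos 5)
Your proposal tracks the paper's route closely: argue by contradiction, analyze the concentration of the (suitably normalized) solution of the linearized problem near each blow-up point, match with the far-field behavior, and derive the vanishing of all kernel coefficients via Pohozaev-type identities combined with the nondegeneracy hypotheses. A few points are worth sharpening.

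First, the paper normalizes not $\phi_n$ itself but the auxiliary function
$\zeta_n = \bigl(\phi_n - c_n\bigr)/\|\phi_n - c_n\|_{L^\infty(\Omega)}$ with $c_n = \int_\Omega he^{u_n}\phi_n / \int_\Omega he^{u_n}$.
This is the decisive move: the subtraction eliminates the nonlocal term, so $\zeta_n$ satisfies the local equation $\Delta\zeta_n + \lambda_n he^{\tilde u_n}\zeta_n = 0$ together with the integral constraint $\int_\Omega he^{\tilde u_n}\zeta_n = 0$; the price is that $\zeta_n$ has an unknown constant boundary value $\zeta_{n,0} = \zeta_n|_{\partial\Omega}$. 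Your statement that the constant $c_n$ "plays no role at leading order" is misleading: after normalization, $\zeta_{n,0} = -b_0 + o(1)$ (see \eqref{4.43}), so it is precisely the quantity that ties together the inner bubble coefficients $b_{j,0}$ and the outer limit $\zeta_n \to -b_0$, and killing it is the whole point of the hypotheses on $\ell(\mathbf q)$ and $D(\mathbf q)$.

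Second, the order of the coefficient-killing steps in the paper is the reverse of what you suggest, and both families are eliminated via Pohozaev-type identities, not via a leading-order matching condition. The radial identity \eqref{4.3}, combined with $\sum_j A_{n,j} = 0$ and either $\ell(\mathbf q)\neq 0$ or $D(\mathbf q)\neq 0$, first gives $b_0 = b_{j,0} = 0$; only afterwards does the translational identity \eqref{4.47}, combined with the Hessian nondegeneracy $\det(D^2_\Omega f_m(\mathbf q))\neq 0$, yield $b_{j,1}=b_{j,2}=0$. Finally, the contradiction is obtained not by a generic improvement of convergence but by examining a maximum point $x_n^*$ of $\zeta_n$ at an intermediate scale $s_n \gg e^{-\mu_{n,j}/2}$, $s_n \to 0$, showing the rescaled limit is a nonzero harmonic function on $\mathbb{R}^2$ of modulus one, and contradicting the spherical average estimate \eqref{3.19} once $b_0 = b_{j,0} = 0$. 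These are refinements of presentation and order rather than a change of route; the overall strategy you describe is the one the paper follows.
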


\brm\label{rem1.9} {\it Actually, Theorem \ref{th1.1} holds for a wider class of domains $\om$. More precisely, as in \cite{CCL} we say that $\om$ is regular if its boundary $\partial \om$ is of class $C^2$ but
for a finite number of points $\{Q_1 , . . . , Q_{N_0}\}\subset \partial \om$ such that the following conditions holds at each $Q_j$.
\begin{enumerate}
  \item [(i)] The inner angle $\theta_j$ of $\partial \om$ at $Q_j$ satisfies $0 < {\theta_j \neq \pi} < 2\pi$;
  \item [(ii)] At each $Q_j$ there is an univalent conformal map from $B_\delta (Q_j) \cap \overline{\om}$ to
the complex plane $\mathbb{C}$ such that $\partial \om \cap B_\delta (Q_j)$ is mapped to a $C^2$ curve.
\end{enumerate}
It can be shown by a moving plane argument (see \cite{CCL}) that solutions of \eqref{1lm} are uniformly bounded in a fixed neighborhood of $\partial\om$. Since the argument of Theorem \ref{th1.1} relies on the local estimates for blow up solutions of \eqref{1lm} it can be carried out for the latter class of domains as well. Hence, Theorem \ref{th1.1} holds for any regular domain $\om$. {See also Theorem 3.A below.}
\medskip

On the other hand, we may consider a more general problem than \eqref{1lm} in 
which the weight $h$ is replaced by $h_n(x)=\hat h_n(x)e^{-4\pi\sum_{i=1}^\ell \alpha_{i,n}G(x,p_{i,n})}$, 
where $p_{i,n}\to p_i$, $\alpha_{i,n}\geq-1+\frac 1C$, $\alpha_{i,n}\to\alpha_i$ for any $i=1,\dots,\ell$ and 
$\hat h_n$ are smooth functions such that $\hat h_n\geq \frac 1C$ in $\overline \om$, $\hat h_n\to\hat h_n$ in $C^{2,\sg}(\overline\om)$, 
for some fixed constant $C>0$ and $\sg>0$. It is straightforward to check that the latter generalization does not affect the argument of Theorem \ref{th1.1} and thus Theorem \ref{th1.1} holds for this class of problems as well.}
\erm

To prove Theorem \ref{th1.1} we will analyze the asymptotic behavior of the auxiliary sequence,
$$\zeta_n=\dfrac{\phi_n-\frac{\int_{\Omega}he^{u_n}\phi_n\d y}{\int_{\Omega}he^{u_n}\d y}}{\left\|\phi_n-\frac{\int_{\Omega}he^{u_n}\phi_n\d y}{\int_{\Omega}he^{u_n}\d y}\right\|_{L^{\infty}(\Omega)}},$$
where $\phi_n$ is a solution of \eqref{1.8l}. Near each blow up point $q_j$ and after a suitable scaling, $\zeta_n$ converges to an
{element in the kernel space of the linearized operator $L$, where}
%entire solution of the linearized problem associated to the Liouville equation:
%\begin{equation}
%\label{1.9}
%\Delta v+ e^{v}=0\quad\textrm{in}\quad \mathbb{R}^2,\quad \int_{\mathbb{R}^2}e^v<+\infty.
%\end{equation}
%The solutions of \eqref{1.9} are classified and take the following form, see \cite{cli1},
%\begin{equation}
%\label{1.10}
%v(z)=v_{\mu,a}(z)=\log\frac{8e^{\mu}}{(1+e^{\mu}\vert z+a\vert^{2})^{2}},\quad\mu\in\mathbb{R},\quad a=(a_1,a_2) \in \mathbb{R}^2.
%\end{equation}
%The linearized operator $L$ relative to $v_{0,0}$  is defined by,
\begin{equation}
\label{1.11}
L\phi:=\Delta \phi+\frac{8}{(1+|z|^2)^2}\phi\quad \mathrm{in}\quad \mathbb{R}^2.
\end{equation}
It is well known (\cite[Proposition 1]{bp}) that the kernel of $L$ has {real dimension $3$.} A crucial point in the proof of Theorem \ref{th1.1} is to show that, 
after suitable scaling and for large $n$, $\zeta_n$ is orthogonal to {the kernel space}. 
However this is not enough, since we also need to figure out the global behaviour of $\zeta_n$ in the zone {far away} from the blow up point. 
These problems are rather subtle and require the well known pointwise estimates \cite{CLin1}, together with some refinements recently derived in \cite{BJLY}. 
Then we will work out a delicate adaptation of some arguments in \cite{BJLY,ly2} based on the analysis of various suitably defined Pohozaev-type identities.
\medskip

This paper is organized as follows. In section \ref{sec2}, we prove Theorem \ref{th1.1}. In section \ref{sec5}, we review the known results for domains of second kind, the local uniqueness for bubbling solutions of mean field equation and the non degeneracy result for the Gelfand problem. In section \ref{sec6}, we prove Theorem \ref{thm1}. In the Appendix we provide the technical estimation for the proof of Theorem \ref{th1.1}.

\section{{Proof of Theorem \ref{th1.1}}}\label{sec2}
\subsection{{Preliminaries}}
{Suppose} that $u_n$ is a sequence of blow-up solutions of \eqref{1.1l} which blows up at $q_j\notin\{p_1,\cdots,p_\ell\},~j=1,\cdots,m.$ Let
\begin{equation*}
\tilde u_n(x)=u_n(x)-\log\left(\int_{\Omega}he^{u_n}\d y\right),
\end{equation*}
then it is easy to see that,
\begin{equation}
\label{2.1}
\Delta \tilde u_n+\lambda_nh(x)e^{\tilde u_n(x)}=0~\mathrm{in}~\Omega\quad\mathrm{and}\quad \int_{\Omega}he^{\tilde{u}_n}\d y=1.
\end{equation}
We denote by,
\begin{equation}
\label{2.2}
\mu_n=\max_\Omega \tilde{u}_n,\quad \mu_{n,j}=\max_{B_{r_0}(q_j)}\tilde u_n=\tilde u_n(x_{n,j})\quad \mathrm{for}\quad j=1,\cdots,m.
\end{equation}
Let us define,
\begin{equation}
\label{2.3}
U_{n,j}(x)=\log\frac{e^{\mu_{n,j}}}{(1+\frac{\lambda_nh({x}_{n,j})}{8}e^{\mu_{n,j}}|{x}-{x}_{n,j,*}|^2)^2},\quad  {x}\in \mathbb{R}^2,
\end{equation}
where the point $x_{n,j,*}$ is chosen to satisfy,
$$\nabla U_{n,j}({x}_{n,j})=\nabla(\log h({x}_{n,j})).$$
It is not difficult to check that,
\begin{align}
\label{2.4}
|x_{n,j}-x_{n,j,*}|=O(e^{-\mu_{n,j}}).
\end{align}
{Near the blow up point $q_j$, we let $\eta_{n,j}$ denote the "error term",}
\begin{equation}
\label{2.5}
\eta_{n,j}(x)=\tilde{u}_n(x)-U_{n,j}(x)-(G_{j}^{*}(x)-G_{j}^{*}(x_{n,j})), \quad {x}\in B_{r_0}(x_{n,j}).
\end{equation}
{For $x\in \omb\setminus \cup_{j=1}^m B_{\tau}(q_j))$,  $\tilde{u}_n$ should be well approximated by a sum of Green's function, which we denote by,}
\begin{equation}
\label{2.15}
w_n(x)=\tilde{u}_n(x)-\sum_{j=1}^m\rho_{n,j} G(x,x_{n,j})-\tilde{u}_{n,0},
\end{equation}{where   here and in the rest of this work, we set,}
\begin{equation}\label{boundary_u}
{\tilde{u}_{n,0}=\tilde u_n|_{\partial\Omega}.}
\end{equation}
{At each blow up point $q_j,~1\leq j\leq m$, we also define the local mass by}
\begin{equation}
\label{2.12}
\rho_{n,j}=\lambda_n \int_{B_{r_0}(q_j)}he^{\tilde{u}_n}\d y.
\end{equation}

The following sharp estimates for blow up solutions of \eqref{1.1l} have been obtained in \cite{CLin1}. The higher order term in (vi) has been obtained more 
recently in \cite{BJLY}.
\medskip

{\bf { Theorem 2.A. (\cite{CLin1})}} \textit{{ Suppose  that $u_n$ is a sequence of blow-up solutions of \eqref{1.1l} which blows up at 
$q_j\notin\{p_1,\cdots,p_\ell\},~j=1,\cdots,m.$ Then the following facts hold: }}

 { (i) $\eta_{n,j}=O(\mu_{n,j}^2 e^{-\mu_{n,j}})$ \textit{on} $B_{r_0}(x_{n,j})$;}

 { (ii) $w_n=o(e^{-\frac{\mu_{n,j}}{2}})$ \textit{on} $C^1(\omb\setminus \cup_{j=1}^m B_{\tau}(q_j))$; }

 { (ii) $e^{\mu_{n,j}}h^2(x_{n,j})e^{G_j^*(x_{n,j})}=e^{\mu_{n,1}}h^2(x_{n,1})e^{G_1^*(x_{n,1})}(1+O(e^{-\frac{\mu_{n,1}}{2}}))$; }

 { (iii) $\mu_{n,j}+\tilde{u}_{n,0}+2\log\left(\frac{\lambda_n h(x_{n,j})}{8}\right)+G_j^*(x_{n,j})
=O(\mu_{n,j}^2 e^{-\mu_{n,j}})$;}

 { (iv) $\nabla[\log h(x)+G_j^*(x)]\Big|_{x=x_{n,j}}=O(\mu_{n,j} e^{-\mu_{n,j}})$ \textit{and} $|{x}_{n,j}-{{q}}_j|=O(\mu_{n,j} e^{-\mu_{n,j}})$;}

 { (v) $\rho_{n,j}-8\pi= O(\mu_{n,j}e^{-\mu_{n,j}})$; }

 { (vi) \textit{for a fixed small constant} $\tau>0$,}
\begin{equation*}
\begin{aligned}
\label{2.14}
\lambda_n-8\pi m=~&\frac{2\ell(\mathbf{q})e^{-\mu_{n,1}}}{m h^2(q_1)e^{G_1^*(q_1)}}\Big(\mu_{n,1}+\log\left(\lambda_n h^2(q_1)e^{G_1^*(q_1)}\tau^2\right)
-2\Big)\\
&+\frac{8e^{-\mu_{n,1}}}{h^2(q_1)e^{G_1^*(q_1)}\pi m}\Big(D(\mathbf{q})
+O(\tau^\sigma)\Big)+O(\mu_{n,1}^2e^{-\frac{3}{2}\mu_{n,1}})+O(e^{-(1+\frac{\sigma}{2})\mu_{n,1}}),
\end{aligned}
\end{equation*}
\textit{where} $\sigma>0$ \textit{is a positive number that satisfies} $\hat h\in C^{2,\sigma}(\ov{\om})$.

\subsection{{The proof of Theorem \ref{th1.1}}} We shall prove Theorem \ref{th1.1} by contradiction. Suppose that \eqref{1.8l} admits a nontrivial solution $\phi_n,$ where $u_n$ is an $m$-bubbling sequence of solutions of \eqref{1.1l} with blow up points $q_j\notin\{p_1,\cdots,p_\ell\}$, $j=1,\cdots,m$, such that $\mathbf{q}=(q_1,\cdots,q_m)$ is a critical point of $f_m$ and $\textrm{det}(D_{\Omega}^2f_m(\mathbf{q}))\neq 0$. Next, let us define,
\begin{equation}
\label{3.1}
\zeta_n(x)=\frac{\phi_n(x)-\frac{\int_{\Omega}he^{u_n}\phi_n\mathrm{d}y}{\int_{\Omega}h e^{u_n}\mathrm{d}y}}
{\left\|\phi_n-\frac{\int_{\Omega}he^{u_n}\phi_n\mathrm{d}y}{\int_{\Omega}he^{u_n}\mathrm{d}y}\right\|_{L^\infty(\Omega)}}
\quad\mathrm{and}\quad
f_n^*(x)=\lambda_nh(x)e^{\tilde{u}_n(x)}\zeta_n(x).
\end{equation}
It is easy to see that,
\begin{equation}
\label{3.2}
\int_{\Omega}f_n^*dx=0,
\end{equation}
and $\zeta_n$ satisfies,
\begin{equation}
\label{3.3}
\begin{aligned}
\Delta \zeta_n+f_n^*(x)=\Delta \zeta_n+\lambda_nh(x)c_n(x)\zeta_n(x)=0,
\end{aligned}
\end{equation}
where\begin{equation}
\label{cn}
\begin{aligned}{c_n(x)=e^{\tilde{u}_n}(x).}\end{aligned}
\end{equation}
{In order to prove Theorem \ref{th1.1}, we shall follow the argument in \cite{ly2, BJLY}. In \cite{ly2,BJLY}, 
the authors used various Pohozaev identities to show the local uniqueness of bubbling solutions. 
In our case, $\zeta_n$ plays the role of the difference of two solutions in \cite{ly2,BJLY} (see Step 3 below).   }

\medskip
\noindent
{Step 1. The asymptotic behavior of $\zeta_n$ near blow up points.} \\
{In this step we shall prove that there exist constants $b_{j,0}$, $b_{j,1}$, and $b_{j,2}$ such that,}
\begin{equation}\label{nearblow}
{ \zeta_n\left(e^{-\frac{\mu_{n,j}}{2}}z+{x}_{n,j}\right)\to  b_{j,0}\psi_{j,0}(z)+b_{j,1}\psi_{j,1}(z)+b_{j,2}\psi_{j,2}(z)
~\mbox{in}~C^0_{\mathrm{loc}}(\mathbb{R}^2),}
\end{equation}
{where,}
\begin{equation*}
{\psi_{j,0}(z)=\frac{1-\pi m h(q_j)|z|^2}{1+\pi m h(q_j)|z|^2},~\psi_{j,1}(z)=\frac{\sqrt{\pi mh(q_j)}z_1}{1+\pi m h(q_j)|z|^2},~
\psi_{j,2}(z)=\frac{\sqrt{\pi mh(q_j)}z_2}{1+\pi m h(q_j)|z|^2}.}
\end{equation*}
{Indeed,  it is well that, after a suitable scaling, $\zeta_{n}$ converges to a function 
$\zeta_j(z)$ in $C^0_{\mathrm{loc}}(\mathbb{R}^2)$,
where $\zeta_j(z)$ satisfies,}
\begin{equation*}{
\Delta \zeta_j+\frac{8\pi m h({q}_j)}{(1+\pi m h({q}_j)|z|^2)^2}\zeta_j(z)=0~\mathrm{in}~\mathbb{R}^2,\quad |\zeta_j|\leq 1.}
\end{equation*}
Then  \eqref{nearblow} follows by \cite[Proposition 1]{bp}.
\medskip

\noindent
{Step 2. The global behavior of $\zeta_n$ far away from the blow up points.}\\
{We claim that there exists  a constant $b_0$ such that,}
\begin{equation}\label{farblow}
{\zeta_n\to-b_0\ \ \textrm{in}\ \ C^0_{\textrm{loc}}( \ov{\om}\setminus\bigcup_{j=1}^m \{q_j\}), \ \ \textrm{and} \  b_0=b_{j,0} \ \ \textrm{for}\  j=1,\cdots, m.}
\end{equation}
{By Theorem 2.A and  $\|\zeta_n\|_{L^\infty(\Omega)}\leq 1$,
we see that $\zeta_n\to\zeta_0$ in $C^0_{\mathrm{loc}}(\omb\setminus\{q_1,\cdots,q_m\})$, where $\zeta_0$ satisfies,}
\begin{equation}
\label{3.14}
\Delta\zeta_0=0~\mathrm{in}~\Omega\setminus\{q_1\cdots q_m\}.
\end{equation}
{By $\|\zeta_n\|_{L^\infty(\Omega)}\leq 1$,  we can extend \eqref{3.14} to $\Omega$.
Then we conclude that,}
\begin{equation}
\label{limit}
\zeta_0\equiv-b_0 \ \textrm{in}\ \  \Omega, \ \textrm{ where}\  b_0\ \textrm{ is a constant}.
\end{equation}
{In order to prove $b_{j,0}=b_0$ for $j=1,\cdots, m$, let us set,}  \[\psi_{n,j}(x)=\frac{1-\frac{\lambda_n}{8}h({x}_{n,j})|x-x_{n,j}|^2e^{\mu_{n,j}}}
{1+\frac{\lambda_n}{8}h(x_{n,j})|x-x_{n,j}|^2e^{\mu_{n,j}}}\ \ \textrm{and}\ \ \zeta_{n,j}^*(r)=\int_0^{2\pi}\zeta_n(r,\theta)\mathrm{d}\theta, \ \textrm{ where}\  r=|x-x_{n,j}|.\]
{Fix large $R>0$. By \eqref{3.3} and Theorem 2.A, we can derive  the following estimate  (see \cite{ly2} and \cite[Lemma  3.4]{BJLY} for details) }
\begin{equation}\label{int_est}
\begin{aligned}&{\frac{1}{r}\int_{\partial B_d(x_{n,j})}\left(\psi_{n,j}\frac{\partial\zeta_n}{\partial\nu}
-\zeta_n\frac{\partial\psi_{n,j}}{\partial \nu}\right)\d\sigma
=(\zeta_{n,j}^*)'(r)\psi_{n,j}(r)-\zeta_{n,j}^*(r)\psi_{n,j}'(r)=\frac{o\left(\frac{1}{\mu_{n,j}}\right)}{r},}\,\end{aligned}\end{equation} for $r\in (Re^{-\mu_{n,j}/2},r_0).$
By using \eqref{int_est}, we conclude that,
\begin{equation}
\label{3.19}
\zeta_{n,j}^*(r)=-2\pi b_{j,0}+o_R(1)+o_n(1)(1+O(R))~\mbox{for all}~r\in (Re^{-\mu_{n,j}/2},r_0),
\end{equation}where $\lim_{n\to+\infty}o_n(1)=0$ and  $\lim_{R\to+\infty}o_R(1)=0$.
This fact together with \eqref{limit}, shoes that $b_{j,0}=b_0$ for $j=1,\cdots, m$ and thus the claim \eqref{farblow} follows.
\medskip

\noindent
{Step 3. The Pohozaev type identities.}\\
{The following Pohozaev type identities play a crucial role in the proof of Theorem \ref{th1.1}. {For any} fixed $r\in(0,r_0)$, the followings hold:}
\begin{equation}
\begin{aligned}
\label{4.3} &\int_{\partial B_r(x_{n,j})}r\langle Dv_{n,j},D\zeta_n\rangle\mathrm{d}\sigma
-2\int_{\partial B_r({x}_{n,j})}r\langle\nu,D v_{n,j}\rangle\langle\nu,D\zeta_n\rangle\mathrm{d}\sigma\\
&=\int_{\partial B_r({x}_{n,j})}r\lambda_nhe^{\tilde{u}_n}\zeta_n\mathrm{d}\sigma
-\int_{B_r(x_{n,j})}\lambda_nh({x})e^{\tilde{u}_n}\zeta_n(2+\langle D(\log h+\phi_{n,j}),x-x_{n,j}\rangle)\mathrm{d}x,
\end{aligned}
\end{equation}and for $i=1,2$, \begin{equation}
\begin{aligned}
\label{4.47} &\int_{\partial B_r(x_{n,j})}\langle\nu, D\zeta_n\rangle D_iv_{n,j}+\langle\nu, Dv_{n,j}\rangle D_i\zeta_n\mathrm{d}\sigma
-\int_{\partial B_r(x_{n,j})}\langle D {v_{n,j}} ,D\zeta_n\rangle\frac{(x-x_{n,j})_i}{|x-x_{n,j}|}\mathrm{d}\sigma\\
&=-\int_{\partial B_r(x_{n,j})}\lambda_nh(x) e^{\tilde{u}_n}\zeta_n\frac{(x-x_{n,j})_i}{|x-x_{n,j}|}\mathrm{d}\sigma
+\int_{B_r(x_{n,j})}\lambda_nh(x)e^{\tilde{u}_n}\zeta_nD_i(\phi_{n,j}+\log h)\mathrm{d}{x}.
\end{aligned}
\end{equation}
where\begin{equation}
\label{4.2}
v_{n,j}(y)=\tilde{u}_n(y)-\phi_{n,j}(y)\ j=1,\cdots,m, \ \textrm{and}
\end{equation}
\begin{equation}
\label{4.1}
\phi_{n,j}(y)=\frac{\lambda_n}{m}\Big[(R(y,x_{n,j})-R(x_{n,j},x_{n,j}))+ \sum_{l\neq j}(G(y,x_{n,l})-G(x_{n,j},x_{n,l}))\Big].
\end{equation}
The proof of \eqref{4.3} is obtained by using,
\begin{equation*}
\Delta\zeta_n+\lambda_nh(x)e^{\tilde{u}_n}\zeta_n=0,\quad
\Delta v_{n,j}+\lambda_nhe^{\tilde{u}_n}=0,
\end{equation*}
together with,
\begin{equation}
\label{4.4}
\begin{aligned}
&\mathrm{div}\left(\nabla\zeta_n\left(\nabla v_{n,j}\cdot(x-x_{n,j})\right)+\nabla v_{n,j}\left(\nabla\zeta_n\cdot(x-x_{n,j})\right)-  \nabla\zeta_n\cdot\nabla v_{n,j}(x-x_{n,j})\right)\\
&=\Delta \zeta_n\left(\nabla v_{n,j} \cdot({x}-{x}_{n,j})\right)+\Delta v_{n,j}\left(\nabla\zeta_n\cdot({x}-{x}_{n,j})\right)\\
&=-\lambda_ne^{v_{n,j}+\phi_{n,j}+\log h}\zeta_n\left(\nabla v_{n,j}\cdot(x-x_{n,j})\right)
-\lambda_ne^{v_{n,j}+\phi_{n,j}+\log h}\left(\nabla\zeta_n\cdot(x-x_{n,j})\right)\\
&=-\mathrm{div}\left(\lambda_n e^{v_{n,j}+\phi_{n,j}+\log h}\zeta_n(x-x_{n,j})\right)+2\lambda_n e^{v_{n,j}+\phi_{n,j}+\log h}\zeta_n\\
&~\quad+\lambda_ne^{v_{n,j}+\phi_{n,j}+\log h}\zeta_n\left(\nabla(\phi_{n,j}+\log h)\cdot(x-x_{n,j})\right).
\end{aligned}
\end{equation}
While the proof of \eqref{4.47} follows by using the equation $\Delta v_{n,j}+\lambda_nh e^{\tilde{u}_n}=0$ and
\begin{equation*}
\begin{aligned}
&\mathrm{div}\left(\nabla\zeta_nD_lv_{n,j}+\nabla v_{n,j}D_l\zeta_n-\nabla\zeta_n\cdot\nabla v_{n,j}e_l\right)\\
&=\Delta\zeta_nD_lv_{n,j}+\Delta v_{n,j} D_l\zeta_n=-\lambda_nh(x) e^{\tilde{u}_n} \zeta_nD_lv_{n,j}-\lambda_nh(x)e^{\tilde{u}_n}D_l\zeta_n\\
&=-\lambda_ne^{v_{n,j}+\phi_{n,j}+\log h}\zeta_nD_l(v_{n,j}+\phi_{n,j}+\log h)-\lambda_ne^{v_{n,j}+\phi_{n,j}+\log h} D_l\zeta_n\\
&\quad+\lambda_ne^{v_{n,j}+\phi_{n,j}+\log h}\zeta_nD_l(\phi_{n,j}+\log h)\\
&=-\mathrm{div}(\lambda_ne^{v_{n,j}+\phi_{n,j}+\log h}\zeta_ne_l)
+\lambda_ne^{v_{n,j}+\phi_{n,j}+\log h}\zeta_nD_l(\phi_{n,j}+\log h),
\end{aligned}
\end{equation*}
where $e_l=\frac{(x-x_{n,j})_l}{|x-x_{n,j}|}$, $l=1,2$.

\medskip
\noindent
{Step 4. Based on the identities established in the last step, we can prove that 
\begin{equation}
\label{bequal0}
b_0=b_{j,0}=b_{j,1}=b_{j,2}=0,~j=1,\cdots,m.
\end{equation}
The proof of the above fact is long and technical and so we postpone it to the last section \ref{appendix}.}
Once we get that \eqref{bequal0} holds, then the proof of Theorem \ref{th1.1} is almost done. Indeed, if $x_n^*$ is a maximum point of $\zeta_n$, then we have,
\begin{equation}
\label{4.56}
|\zeta_n(x_n^*)|=1.
\end{equation}
In view of Step 1-2 and \eqref{bequal0}, we find that $\lim_{n\to+\infty}x_n^*=q_j$ for some $j$ and
\begin{equation}
\label{4.57}
\lim_{n\to+\infty}e^{\frac{\mu_{n,j}}{2}}s_n=+\infty,~\mathrm{where}~s_n=|x_n^*-x_{n,j}|.
\end{equation}
Setting $\tilde{\zeta}_n(x)=\zeta_n(s_n {x}+x_{n,j})$, then from \eqref{3.3} and Theorem 2.A we see that $\tilde{\zeta}_n$ satisfies,
\begin{equation*}
\begin{aligned}
\Delta\tilde{\zeta}_n+\frac{\lambda_nh(x_{n,j})s_n^2e^{\mu_{n,j}}(1+ O(s_n|x|)+o(1))\tilde{\zeta}_n}
{(1+\frac{\lambda_nh(x_{n,j})}{8}e^{\mu_{n,j}}|s_nx+x_{n,j}-x_{n,j,*}|^2)^2}=0.
\end{aligned}
\end{equation*}
On the other hand, by \eqref{4.56} we also have,
\begin{equation}
\label{4.58}
\Big|\tilde{\zeta}_n\Big(\frac{x_n^*-x_{n,j}}{s_n}\Big)\Big|=|\zeta_n(x_n^*)|=1.
\end{equation}
In view of \eqref{4.57} and $|\tilde{\zeta}_n|\leq 1$ we see that $\tilde{\zeta}_n\to\tilde{\zeta}_0$
on any compact subset of $\mathbb{R}^2\setminus\{0\}$, where $\tilde{\zeta}_0$ satisfies $\Delta \tilde{\zeta}_0=0$
in $\mathbb{R}^2\setminus\{0\}$. Since $|\tilde{\zeta}_0|\leq 1$, we have $\Delta \tilde{\zeta}_0=0$ in $\mathbb{R}^2$, whence
$\tilde{\zeta}_0$ is a constant. At this point, since $ \frac{|x_n^*-x_{n,j}|}{s_n}=1$ and in view of \eqref{4.58}, we find that,
$\tilde{\zeta}_0\equiv1$ or $\tilde{\zeta}_0\equiv-1$. As a consequence we conclude that,
$$|\zeta_n(x)|\ge \frac{1}{2}~\mathrm{if}~s_n\leq |{x}-{x}_{n,j}|\leq 2s_n,$$
which contradicts \eqref{3.19},  since $e^{-\frac{\mu_{n,j}}{2}}<< s_n$, $\lim_{n\to+\infty}s_n=0$, and $b_0=b_{j,0}=0$.
This fact concludes the proof of Theorem \ref{th1.1}.\hfill$\Box$

\medskip
\section{The proof of Theorem \ref{thm1}: preliminary results}\label{sec5}
First of all, we will need the following characterization of domains of second kind recently derived in \cite{BLin3}. 
Let us recall that here $\gamma_{\sscp \om}(x)=R(x,x)$.

\medskip

{\bf Theorem 3.A. (\cite{BLin3})} {\it A domain $\om$ is of second kind if and only if $\gamma_{\sscp \om}$ admits at least one
maximum point $q$ such that $D_{\sscp \om}(q)>0$, where,
$$D_{\sscp \om}(q)=\lim_{\eps \to 0^+} \int_{\om\setminus B_{\eps}(q)} {\dsp \frac{\e{8\pi(R(x,q)-\gamma_{\sscp \om}(q)) }-1}{|x-q|^4}}-\int_{\R^2\setminus \om}\frac{1}{|x-q|^4}\,.$$}

\medskip

\brm{\it It is easy to check that the above defined quantity $D_{\sscp \om}(q)$ is proportional to the $D(q)$ as introduced in \eqref{1.5}, 
i.e. $D_{\sscp \om}(q)=c D(q)$ for some constant $c>0$ {provided that $h$ is a constant function and $m=1$. }}
\erm

Next we will need the following particular case of a uniqueness result recently derived in \cite{BJLY}.

\medskip

{\bf Theorem 3.B. (\cite{BJLY})} {\it Let $\om$ be any smooth and bounded domain,  $h$ a constant function and let  $u_{n,i}$, $i=1,2$ be two sequences of solutions of \eqref{1lm} which satisfy, $$\lm_{n,1}=\lm_n=\lm_{n,2},\;n\in \N,$$ and,}
\beq
\label{blowup}
\lm_n\dfrac{\dsp\e{u_n}}{\dsp\ino \e{u_n}}\rightharpoonup 8\pi \dt_{x=q},\mbox{ as }n\to +\ii,
\eeq
{\it weakly in the sense of measures in $\om$, where ${q}$ is a critical point of $\gamma_{\sscp \om}$ such that $\textrm{det}(D^2\gamma_{\sscp \om}({q}))\neq 0$. If $D_{\sscp \om}({q})\neq 0$, then there exists $n_0\in\N$ such that $u_{n,1}=u_{n,2}$ for all $n\ge n_0$.}

\medskip

\brm\label{seckind} {\it A crucial estimate needed to make our argument work has been first obtained in \cite{CCL} for $1$-point blow up solutions
and more recently generalized to the general $m$-point blow up case in \cite{BJLY}. In particular, if $u_n$ satisfies \eqref{blowup}, then it holds,
$$\lm_n-8\pi={c_0}(D_{\sscp \om}(q)+o(1))\left(\max\limits_{x\in \ov{\om}} \dfrac{\dsp \e{u_n(x)}}{\dsp \ino \e{u_n}}\right)^{-1}, \;\ainf,$$
where $c_0>0$ is a constant, see \cite{CCL,BJLY} and Theorem 2.A\mbox{\rm (vi)}.}
\erm

Finally we will need a non degeneracy result about solutions of the Gelfand problem derived in \cite{GlGr}.

\medskip
{\bf {Theorem 3.C. } (\cite{GlGr})} {\it Let $\ue$ be a family of solutions of,}
\beq\label{peps}
\begin{cases}
-\Delta u_{\varepsilon}=\varepsilon^2e^{u_{\varepsilon}}~&{\it in}~\Omega,\\
u_{\varepsilon}=0~&{\it on}~\partial\Omega,
\end{cases}
\eeq {\it which satisfy,}
\beq\label{peps1}
\eps^2\e{\ue}{\rightharpoonup}  8\pi \dt_{q},\,\eps \to 0^+,
\; \mbox{\it weakly in the sense of measures in }\om,
\eeq
{\it where $q$ is  is a critical point of $\gamma_{\sscp \om}$ such that $\textrm{det}(D^2\gamma_{\sscp \om}({q}))\neq 0$. Then there exists $\eps_0>0$ such that the linearized problem relative to \eqref{peps} admits only the trivial solution  for any $\eps\in (0,\eps_0)$.}
\medskip

\section{The proof of Theorem \ref{thm1}}\label{sec6}
In this section we prove Theorem \ref{thm1}.

\medskip

{\it The Proof of Theorem \ref{thm1}.} Let $\pl$ denote any solution of $\prl$. {For a fixed $E_{\sscp \om}>E_{8\pi}$ 
and a fixed function $\lm_{\sscp\ii}:(E_{\sscp \om},+\ii)\to (0,+\ii)$, let $\rh_{\sscp \ii, E}:=\rh_{\sscp \lm_{\sscp \ii}(E)}(\psi_{\sscp \lm_{\sscp \ii}(E)})$ 
be the corresponding density for $E\in(E_{\sscp \om},+\ii)$, where $\psi_{\sscp \lm_{\sscp \ii}(E)}$ is a solution of $\prl$ with
$\lm=\lm_{\sscp \ii}(E)$.} If, for a fixed $E\in(E_{\sscp \om},+\ii)$, we assume that $\pl$ and $\lm_{\sscp\ii}$ are differentiable at the points $\lm=\lm_{\sscp\ii}(E)$ and $E$ respectively, then, by defining,
\beq\label{1.1en}
S_{\ii}(E)=\mathfrak{S}\left(\rh_{\sscp \ii, E}\right),
\eeq
it can be shown that,
\beq\label{diff}
\frac{dS_{\ii}(E)}{d E}=-\lm_{\sscp\ii}(E),
\eeq
see (5.1) in \cite{B2}. Theorem \ref{thm1} follows immediately from the next result which has its own interest, as it describes some properties of the
entropy maximizers in the high energy regime.

\bte\label{thm2} $\mathbf{(a)}$ Under the assumptions of Theorem \ref{thm1}, there exist $E_*> E_{8\pi}$ and a smooth, strictly decreasing function $\lm_{\sscp\ii}(E)$, $E\in[E_*,+\ii)$,
satisfying $\lm_{\sscp\ii}(E)\searrow 8\pi^+$ as $E\to +\ii$,
and there exists a smooth map $(8\pi, \lm_{\sscp \ii}(E_*))\ni \lm \mapsto \psi^{(\ii)}_{\lm}$,
such that $S(E)=S_{\ii}(E)$ takes the form \eqref{1.1en} with
$\psi_{\sscp \lm_{\sscp \ii}(E)}:=\psi^{(\ii)}_{\sscp \lm_{\sscp \ii}(E)}$, $E\in [E_*,+\ii)$.\\
$\mathbf{(b)}$ $S(E)$ is smooth and strictly convex,
and $\frac{dS(E)}{d E}=-\lm_{\sscp\ii}(E)$ for any $E\in [E_*,+\ii)$.
\ete

\brm\label{secep0}
{\it Since $\om$ is convex, then $\gamma_{\sscp \om}$ admits a unique critical point which coincides with its unique
maximum point, say $x_{\sscp \om}\in \om$, which is also non degenerate, that is, $\textrm{det}(D^2\gamma_{\sscp \om}({x_{\sscp \om}}))\neq 0$, see \cite{Gu}.
We will often use this fact throughout the proof.}
\erm

\medskip

{\it The proof of Theorem \ref{thm2}.} {Step 1:} Let $E_n\to +\ii$ and let $\lm_n$ and $\psi_n:=\psi_{\sscp \lm_n}$ be any sequence of entropy maximizers with energy $E_n$. Then, as a consequence of \textbf{(MVP1)}-\textbf{(MVP2)}, $\{\psi_n\}$ solves $\prl$ with
$\lm=\lm_n$ and satisfies \rife{conc}. By the Pohozaev identity jointly with the convexity of $\om$, see for example \cite{clmp1}, we have $\lm_{n}\leq \ov{\lm}<+\ii$ for any $n\in \N$ {and some $\ov{\lm}\in\mathbb{R}$}. Since $\om$ is convex, then a well known moving plane argument shows that $\psi_{n}$ is uniformly bounded from above near $\pa\om$. Therefore, in view of \rife{conc} and the results in \cite{bm} and \cite{yy}, it is not difficult to see that,
\beq\label{lmn8}
\lm_n\to 8\pi \mbox{ and }u_n=\lm_n\psi_n \mbox{ satisfies }\rife{1lm},\rife{blowup}\;\ainf,
\eeq
with blow up point $q=x_{\sscp \om}$ as defined in Remark \ref{secep0}. We point out that here the convexity is used with the Pohozaev identity to provide 
a bound from above for the Lagrange multipliers relative to the high energy regime.
\medskip

{Step 2:} By Remark \ref{secep0} we can apply a result in \cite{suzD}, and conclude that there exists $\eps_*>0$ such that there exists a continuous map from $(0,\eps_*)$ to $C^{2}_0(\ov{\om})$: $\eps\mapsto u_{\eps}$ such that $\ue$ solves \rife{peps} and satisfies \rife{peps1} with $q=x_{\sscp \om}$. Obviously we have,
\beq\label{inj1}
\eps_1\neq \eps_2\quad \Rightarrow \quad u_{\scp \eps_1}\neq u_{\scp \eps_2}.
\eeq

By {Theorem 3.C}, and taking a smaller $\eps_*$ if necessary, then $\ue$ is non degenerate for any $\eps\in (0,\eps_*)$, that is, the linearized problem
relative to \rife{peps} has only the trivial solution. Therefore the implicit function theorem implies that the map $\eps\mapsto \ue$ is smooth in the
$C^{2}_0(\ov{\om})$ topology. Let,
$$
\lep=\eps^2 \ino \e{\ue},\;\eps\in (0,\eps_*),
$$
which therefore is smooth in $(0,\eps_*)$ as well.

\brm\label{secep}
{\it Since $\om$ is of second kind, then Remark \ref{secep0} and Theorem {3.A} imply that $D_{\sscp \om}(x_{\sscp \om})>0$ and so, as a consequence of  Remark \ref{seckind},
we see in particular that $\lm_{\eps}\to 8\pi^+$ as $\eps\searrow 0^+$. This is a crucial point which will play a major role in the rest of the proof.}
\erm

Next, we have the following,
\ble\label{lem1}
By taking a smaller $\eps_*$ if necessary, then the map $\eps\mapsto \lep$ is  injective.
\ele
\proof If not there exist $\eps_{n,i}\searrow 0^+$, $i=1,2$ such that, $\eps_{n,1}\neq\eps_{n,2}$, $\lm_n:=\lm_{\eps_{n,1}}\equiv \lm_{\eps_{n,2}}\to 8\pi$ and
$u_{n,i}:=u_{\scp \eps_{n,i}}$, $i=1,2$ satisfy,
\begin{equation*}
\begin{cases}
\Delta u_{n,i}+\lambda_n\dfrac{\dsp e^{u_{n,i}}}{\dsp \int_{\Omega}e^{u_{n,i}}}=0~&\mathrm{in}~\Omega,\\
u_{n,i}=0~&\mathrm{on}~\partial\Omega,\\
\lambda_n\dfrac{\dsp e^{u_{n,i}}}{\dsp \int_{\Omega}e^{u_{n,i}}}\rightharpoonup 8\pi\delta_{x_{\Omega}}~&\mathrm{as}~n\to+\infty.
\end{cases}
\end{equation*}
In view of  \rife{inj1} we have $u_{n,1}\neq u_{n,2}$. On the other side, by Remark \ref{secep}, we have that
$D_{\sscp \om}(x_{\sscp \om})>0$ and then {Theorem 3.B} yields the desired contradiction.
\finedim

\bigskip

At this point, since $\lep$ is continuous and injective in $(0,\eps_*)$, then it must be strictly monotone in $(0,\eps_*)$ and then, in view of Remark \ref{secep}, we conclude that $\lep$ is strictly increasing. Therefore, it is well defined $\lm_*=\lim_{\eps\to \eps_*^-} \lep$ which in particular is finite, $\lm_*\leq \ov{\lm}$, see step 1. Thus, it is well defined the inverse map $\lm\mapsto \eps_{\scp \lm}$, which is also continuous, strictly increasing and differentiable almost everywhere in $(8\pi,\lm_*)$. As a consequence, by defining,
\beq\label{branch}
\pl^{(\ii)}:=\frac1\lm u_{\scp \eps_{\sscp \lm}},\;\lm \in (8\pi,\lm_*),
\eeq
then the branch,
$$\mathcal{G}_\ii:=\{(\lm,\pl^{(\ii)}),\lm\in (8\pi,\lm_*)\},$$
is a continuous branch of solutions of $\prl$ which also satisfies,
\beq
\label{conc1}
\dfrac{\dsp \e{\lm \pl}}{\dsp \ino \e{\lm \pl}}\rightharpoonup \dt_{ x_{\sscp \sscp \om}},\;\mbox{ as }\lm \searrow 8\pi.
\eeq
Clearly, since $\ue$ is smooth in $\eps$ with respect to the $C^{2}_0(\om)$ topology, then we conclude that $\pl^{(\ii)}$
is differentiable almost everywhere as a function of $\lm$ in $(8\pi,\lm_*)$, with respect to the $C^{2}_0(\om)$ topology as well. 
At this point we can prove that Theorem \ref{th1.1} implies the following fact:
\medskip 

{$\mathbf{(i)}$}  By taking a smaller $\lm_*$ if necessary, then $\pl^{(\ii)}$ is smooth as a function of $\lm$ in $(8\pi,\lm_*)$ with respect to the $C^{2}_0(\om)$ topology.

Indeed, clearly if $\pl^{(\ii)}$ is not differentiable at some point $\lm$, then we must have at least one vanishing eigenvalue in the spectrum of \rife{1.8l}. 
Therefore, if {$\mathbf{(i)}$} were not true, then we could find a bubbling sequence as $\lm_n\to 8\pi$ such that \rife{1.8l} admits a non trivial solution for any $n$, 
which is a contradiction to Theorem \ref{th1.1}. In particular for any $\lm\in (8\pi,\lm_*)$ the linearized problem \rife{1.8l} has only the trivial solution, whence $\lm\mapsto \pl^{(\ii)}$ is smooth as a function of $\lm$ in $(8\pi,\lm_*)$.
\medskip

{Step 3:} We will use  {Theorem 3.B,} \rife{lmn8} in step 1, Remark \ref{secep} and  \rife{branch}, \rife{conc1} to identify
the stream functions corresponding to the entropy maximizing densities at fixed and large enough energy $E$.
More exactly we have,
\ble Let $\ple$ denote the stream function corresponding to any entropy maximizing density at fixed energy $E$, whose Lagrange multiplier is $\lm(E)$.
There exists $E_{*}>E_{8\pi}$ such that, for any
$E\geq E_{*}$,  there exists $\eps(E)\in (0,\eps_*)$ such that,
$$
\lm(E)=\lm_{\sscp \ii}(E):= \left.\lep\right|_{\eps=\eps(E)},\;
\ple=\psi^{(\ii)}_{\sscp \lm_{\sscp\ii}(E)}\equiv \left.\frac1\lep \ue\right|_{\eps=\eps(E)},\mbox{ and in particular } (\lm,\pl^{(\ii)})\in \mathcal{G}_\ii.
$$
\ele
\proof Since $\eps_{\sscp \lm}$, $\lm\in (8\pi,\lm_*)$ is an homeomorphism, then it is enough to check that, by taking a smaller $\lm_*$ if necessary,
there exists $E_*>E_{8\pi}$ such that, for any $E>E_*$, there exists $\lm\in (8\pi,\lm_*)$ such that
the stream function $\ple$ corresponding to any entropy maximizing density at fixed $E$ satisfies,
$$
\ple\equiv \pl^{(\ii)}=\frac1\lm u_{\scp \eps_{\sscp \lm}},\quad(\lm,\pl^{(\ii)})\in \mathcal{G}_\ii.
$$
By ${\bf (MVP1)}$ we know that $(\lm(E), \ple)$ exists for any $E>0$. We argue by contradiction and suppose that there exists $E_n\to +\ii$ and $\lm_n:=\lm(E_n)$ such that $\psi_n:=\psi_{\sscp \lm_n}$, $n\in \N$, are the stream
functions of some entropy maximizing densities at energies $E_n$ but $(\lm_n,\psi_n)\notin \mathcal{G}_\ii$ for any $n$.
In view of \rife{lmn8} in step 1 and Remark \ref{secep} we see that $\lm_n\to 8\pi^+$ and $v_n=\lm_n\psi_n$ satisfies \rife{blowup} with $q=x_{\sscp \om}$.
Therefore, for any $n$ large enough, we also find that $\lm_n\in (8\pi,\lm_*)$ and we denote by $u_n=\lm_n \psi^{(\ii)}_{\sscp \lm_n}$ the sequence of solutions
of \rife{1lm}, $(\lm_n,\psi^{(\ii)}_{\sscp \lm_n})\subset \mathcal{G}_\ii$, as defined in \rife{branch}, which in view of \rife{conc1} also satisfy \rife{blowup}.
Therefore we have found two distinct sequences of solutions of \rife{1lm} sharing
the same $\lm_n$ and satisfying \rife{blowup} which is a contradiction to {Theorem 3.B.}
\finedim

\medskip

Next we claim that:
\medskip

{$\mathbf{(ii)}$} By taking a {larger} $E_*$ if necessary, then $\lm_{\sscp\ii}(E)$ is  smooth, strictly decreasing in $[E_*,+\ii)$, and $\lm_{\sscp\ii}(E)\searrow 8\pi^+$ as $E\to +\ii$.\\

{\it Proof of $\mathbf{(ii)}$.} Since $\lm_\eps$ is smooth and strictly increasing in $(0,\eps_*)$ and $\lep\to 8\pi^+$ as $\eps\to 0^+$, then it is enough to show that the energy $E_{\scp \eps}$ is smooth as a function of $\eps$, with $\frac{d E_\eps}{d\eps}<0$ in $(0,\eps_*)$ and $E_{\scp \eps}\to +\ii$ as $\eps\searrow 0^+$. This is the content of the following,

\ble
\label{enrgeps}
By taking a smaller $\eps_*$ if necessary, then, for $\eps\in (0,\eps_*)$, is well defined the map,
$$E_{\scp \eps}:=\mathcal{E}(\mbox{\rm $\rh_{\sscp \lep}(\psi_{\scp \lep})$}),$$
which is also smooth and strictly decreasing, with $\frac{d E_\eps}{d\eps}\to -\ii$ and $E_{\scp \eps}\to +\ii$ as $\eps\searrow 0^+$.
\ele

\proof Let $\we=\frac{d\ue}{d\eps}$ which is well defined for $\eps\in (0,\eps_*)$ for some $\eps_*$ small enough by { Theorem 3.C.}
It is easy to check that,
\beq\label{ue1}
\begin{cases}
\Delta w_{(\eps)}+2\eps e^{u_{\eps}}+\eps^2e^{u_{\eps}}w_{(\eps)}=0~&\mathrm{in}~\Omega,\\
w_{(\eps)}=0~&\mathrm{on}~\partial\Omega,
\end{cases}
\eeq
and
\beq\label{ue2}
\frac{d\lep}{d\eps}=2\eps \ino \e{\ue}+\eps^2\ino \e{\ue}\we,
\eeq
and in particular that,
$$
E_\eps=\dfrac 12\dfrac{\eps^2\ino \e{\ue}\ue}{\lep^2},
$$
where we used
\begin{equation*}
u_{\eps}=\lambda_{\eps}\psi_{\eps}\quad\mathrm{and}\quad\lambda_{\eps}=\eps^2\int_{\Omega}e^{u_{\eps}}.
\end{equation*}
Multiplying the equation  in \rife{ue1} by $\ue$, integrating by parts and using \rife{peps} we find that,
$$
\eps^2 \ino \e{\ue} \we=2\eps \ino \e{\ue}\ue + \eps^2\ino \e{\ue}\we \ue,
$$
which readily implies that,
\beq\label{ue4}
\frac{d}{d\eps}\left(\eps^2\ino \e{\ue}\ue\right)=2\eps \ino \e{\ue}\ue + \eps^2\ino \e{\ue}\we \ue+\eps^2 \ino \e{\ue} \we=2\eps^2 \ino \e{\ue} \we.
\eeq
In view of \rife{ue2} and \rife{ue4} we find that,
\begin{align*}
\lep^4\frac{d E_\eps}{d\eps}
=~&\frac12 \frac{d}{d\eps}\left(\eps^2\ino \e{\ue}\ue\right)\lep^2-\lep \frac{d\lep}{d\eps}\eps^2\ino \e{\ue}\ue\\
=~&\left(\eps^2 \ino \e{\ue} \we\right)\lep^2-\lep \frac{d\lep}{d\eps}\eps^2\ino \e{\ue}\ue\\
=~&\left(\frac{d\lep}{d\eps}-\frac{2\lep}{\eps}\right)\lep^2-\lep \frac{d\lep}{d\eps}\eps^2\ino \e{\ue}\ue
=\frac{d\lep}{d\eps}\left( \lep^2- \lep \eps^2\ino \e{\ue}\ue\right)-\frac{2\lep^3}{\eps}.
\end{align*}
At this point, by using the fact that $\frac{d\lep}{d\eps}\geq 0$,  $\eps^2\ino \e{\ue}\ue\to +\ii$ and that $\lep\to 8\pi$ as $\eps\to 0^+$, 
then we conclude by the last equality that $\frac{d E_\eps}{d\eps}\to -\ii$ as $\eps\to 0^+$. 
The fact that $\eps^2\ino \e{\ue}\ue\to +\ii$ is a straightforward consequence of well known blow up arguments, see for example
\cite{yy}, which however could be easily deduced from \rife{2.5} and Theorem 2.A. Clearly {\it $\mathbf{(i)}$} and {\it $\mathbf{(ii)}$} imply that $\lm_{\sscp \ii}(E)$ and $\psi^{(\ii)}_{\sscp \lm_{\sscp\ii}(E)}$ satisfy all the properties needed in $[E_*,+\ii)$ and in $(8\pi,\lm_{\sscp \ii}(E_*))$ respectively, which concludes the proof of $\mathbf{(a)}$.
\medskip

The conclusion $\mathbf{(b)}$ is a straightforward consequence of $\mathbf{(a)}$ and \rife{diff}.
\finedim
\medskip

\section{Appendix: the proof of $b_0=b_{j,i}=0$}\label{appendix}
In this section, we are going to show $b_0=b_{j,i}=0$ by making use of the Pohozaev type identities \eqref{4.3} and \eqref{4.47}. 
The argument is an adaptation of those in \cite{ly2} and \cite{BJLY}. Therefore we shall discuss the main steps and refer the readers to \cite{ly2,BJLY} 
for further details.
\medskip

To analyse the various terms in \eqref{4.3} and \eqref{4.47} we need a delicate estimate about $\nabla v_{n,j}$ and $\nabla\zeta_n$. Let
\begin{equation}
\label{4.5}
\widetilde{G}(x)=\frac{\lambda_n}{m}\sum_{l=1}^m G(x,x_{n,l}){.}
\end{equation}{
In view of Theorem 2.A, it is not difficult to see that,}
\begin{equation}
\label{4.7}
\begin{aligned}
\nabla v_{n,j}=\nabla(\widetilde{G}-\phi_{n,j})+o(e^{-\frac{\mu_{n,j}}{2}})
{=-4\frac{x-x_{n,j}}{|x-x_{n,j}|^2}+ o(e^{-\frac{\mu_{n,j}}{2}})}~\mathrm{in}~\bigcup_{l=1}^mB_{2r_0}({x}_{n,l})\setminus B_{r/2}({x}_{n,l}).
\end{aligned}
\end{equation}
{For the estimate of $\nabla\zeta_n$ we will apply the Green representation formula and a suitable scaling. 
It is convenient to introduce the following notations,}
\begin{equation}
\label{3.6}
\Lambda_{n,j,r}^{-}=re^{-\mu_{n,j}/2},\quad \Lambda_{n,j,r}^{+}=re^{\mu_{n,j}/2},
\ \ \textrm{for any}\ \  r>0, \end{equation}
and
\begin{equation}
\label{sc}\overline{f}(z)=f(e^{-\frac{\mu_{n,j}}{2}}z+x_{n,j}),~|z|<r_0 e^{\frac{\mu_{n,j}}{2}}
~\mbox{for any function}~f:B_{r_0}({x}_{n,j})\to\mathbb{R}.
\end{equation}
\medskip

\begin{lemma} 
\label{le3.2}
\begin{equation}
\label{3.7}
\begin{aligned}
\zeta_n(x)-\zeta_{n,0}=&\sum_{j=1}^mA_{n,j}G(x_{n,j},x)+\sum_{j=1}^m\sum_{h=1}^2B_{n,j,h}\partial_{y_h}G(y,x)|_{y=x_{n,j}}
+o(e^{-\frac{1}{2}\mu_{n,1}}),
\end{aligned}
\end{equation}
holds in $C^1(\ov{\om}\setminus\cup_{j=1}^m B_{\delta}(x_{n,j}))$,
for a suitably defined positive constant $\delta>0$, where $\zeta_{n,0}=\zeta_n|_{\partial\Omega}$,
$\partial_{y_h} G({y},{x})=\frac{\partial G({y},{x})}{\partial {y}_h}$, $y=(y_1,y_2)$,
\begin{equation*}
A_{n,j}=\int_{\Omega_j}f_n^*(y)\mathrm{d}y,
\quad\mbox{and}\quad
B_{n,j,h}=e^{-\frac{1}{2}\mu_{n,j}}\frac{b_{j,h}4\sqrt{8}}{\sqrt{\lambda_n h(q_j)}}
\int_{\mathbb{R}^2}\frac{|z|^2}{(1+|z|^2)^3}\mathrm{d}z.
\end{equation*}
Moreover, there exists a constant $C>0$ such that,
\begin{equation}
\label{3.8}
\begin{aligned}
\left|\zeta_n(x)-\zeta_{n,0}-\sum_{j=1}^m A_{n,j}G(x_{n,j},x)\right| \leq {C}\sum_{j=1}^m e^{-\frac{\mu_{n,j}}{2}}\frac{1}{|x-x_{n,j}|},
\end{aligned}
\end{equation}
for $x\in \Omega\setminus\bigcup_{j=1}^m B_{\Lambda_{n,j,R}^{-}}(x_{n,j})$, for any $R>0$ (where $C$ do not depend by $R$).
\end{lemma}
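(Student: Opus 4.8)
\textbf{Proof plan for Lemma \ref{le3.2}.}

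The plan is to derive the representation \eqref{3.7} from the Green representation formula applied to $\zeta_n$, which satisfies \eqref{3.3} with source $f_n^*=\lambda_n h e^{\tilde u_n}\zeta_n$ and Dirichlet datum $\zeta_{n,0}$ on $\partial\Omega$. Writing
$$\zeta_n(x)-\zeta_{n,0}=\int_\Omega G(y,x)f_n^*(y)\,\d y,$$
I would split the integral into the $m$ local pieces over $\Omega_j$ (the sets fixed before \eqref{1.5}) plus, when $m\ge 2$, the remaining region; the latter is negligible by Theorem 2.A(ii) since $\tilde u_n$ is controlled there by $w_n=o(e^{-\mu_{n,j}/2})$ together with the sum of Green's functions, so $e^{\tilde u_n}$ is exponentially small away from the blow up points. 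On each $\Omega_j$ the mass $f_n^*$ concentrates at $x_{n,j}$ at scale $e^{-\mu_{n,j}/2}$, so I Taylor-expand $G(y,x)$ in $y$ around $x_{n,j}$: the zeroth order term gives $A_{n,j}G(x_{n,j},x)$ with $A_{n,j}=\int_{\Omega_j}f_n^*$, the first order term gives $\sum_h \partial_{y_h}G(y,x)|_{y=x_{n,j}}\int_{\Omega_j}(y-x_{n,j})_h f_n^*(y)\,\d y$, and the quadratic remainder must be shown to be $o(e^{-\mu_{n,1}/2})$ uniformly on $\overline\Omega\setminus\cup_j B_\delta(x_{n,j})$.

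The main computational point is identifying the first-moment integrals $\int_{\Omega_j}(y-x_{n,j})_h f_n^*(y)\,\d y$ with the coefficients $B_{n,j,h}$. Here I rescale $y=e^{-\mu_{n,j}/2}z+x_{n,j}$; by Step 1 (equation \eqref{nearblow}) the rescaled $\zeta_n$ converges to $b_{j,0}\psi_{j,0}+b_{j,1}\psi_{j,1}+b_{j,2}\psi_{j,2}$, while by Theorem 2.A(i),(iii) the rescaled weight $\lambda_n h e^{\tilde u_n}e^{-\mu_{n,j}}$ converges to $\frac{8\pi m h(q_j)}{(1+\pi m h(q_j)|z|^2)^2}$ (equivalently $\frac{8}{(1+|z|^2)^2}$ after the standard normalization, which is where the factor $\sqrt{8}/\sqrt{\lambda_n h(q_j)}$ enters). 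The odd functions $\psi_{j,0}$ and $\psi_{j,h}$ integrated against $z_h$ and the radial kernel: $\psi_{j,0}$ contributes zero by parity, and $\psi_{j,h}$ contributes a multiple of $\int_{\mathbb R^2}\frac{|z|^2}{(1+|z|^2)^3}\,\d z$ after diagonalizing. Carrying the extra $e^{-\mu_{n,j}/2}$ from the change of variables in the moment gives the stated $B_{n,j,h}=e^{-\mu_{n,j}/2}\frac{b_{j,h}4\sqrt{8}}{\sqrt{\lambda_n h(q_j)}}\int_{\mathbb R^2}\frac{|z|^2}{(1+|z|^2)^3}\,\d z$. Uniform integrability of the rescaled quantities, needed to pass the moment identities to the limit, follows from the pointwise decay estimates of \cite{CLin1,BJLY} quoted in Theorem 2.A, and the $C^1$ convergence in \eqref{3.7} follows by differentiating the Green representation and applying the same splitting to $\nabla_x G$.

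For the sharp bound \eqref{3.8}, the point is that once the leading Green-function term $\sum_j A_{n,j}G(x_{n,j},x)$ is subtracted, the remainder is $\int_\Omega (G(y,x)-\sum_j \chi_{\Omega_j}(y)G(x_{n,j},x))f_n^*(y)\,\d y$, and using $|G(y,x)-G(x_{n,j},x)|\le C|y-x_{n,j}|/|x-x_{n,j}|$ for $y$ near $x_{n,j}$ together with $\int_{\Omega_j}|y-x_{n,j}|\,|f_n^*(y)|\,\d y=O(e^{-\mu_{n,j}/2})$ (again from Theorem 2.A, since the first moment of the concentrating mass is of this order) yields the claimed estimate, valid on $\Omega\setminus\cup_j B_{\Lambda_{n,j,R}^-}(x_{n,j})$ with a constant independent of $R$ because the bad region $|x-x_{n,j}|<\Lambda_{n,j,R}^-=Re^{-\mu_{n,j}/2}$ is precisely where the logarithmic singularity of $G$ would spoil the $1/|x-x_{n,j}|$ control. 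The hardest part is the bookkeeping for the quadratic remainder in the Taylor expansion and the justification that all the rescaled convergences are strong enough (uniform up to the relevant scales) to extract the precise constants $A_{n,j}$ and $B_{n,j,h}$; this is exactly the kind of estimate worked out in \cite[Lemma 3.4]{BJLY} and \cite{ly2}, and I would adapt those arguments, the only new feature being the presence of $\zeta_n$ (with its unknown boundary value $\zeta_{n,0}$) in place of a difference of two solutions.
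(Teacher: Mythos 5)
Your proposal is correct and follows essentially the same route as the paper: Green representation formula, splitting the source integral over the $\Omega_j$, Taylor expansion of $G(\cdot,x)$ about $x_{n,j}$, and rescaling to identify $A_{n,j}$, $B_{n,j,h}$ via the profile in Step 1 and the Theorem 2.A estimates, with the computational details deferred to \cite{ly2} and \cite{BJLY} exactly as the paper does. (Two small slips that do not affect the argument: the $\Omega_j$ already cover $\overline\Omega$, so there is no separate ``remaining region'' for $m\geq 2$; and $\psi_{j,0}$ is radial, hence \emph{even}, so it is the product $\psi_{j,0}\,z_h$ that is odd and vanishes by parity.)
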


\begin{proof}{The proof of Lemma \ref{le3.2} is based on the Green representation formula}
\begin{equation}
\label{3.9}
\begin{aligned}
\zeta_n(x)-\zeta_{n,0}&=\int_{\Omega}G(y,x)f_n^*(y)\mathrm{d}y\\
&=\sum_{j=1}^m A_{n,j}G(x_{n,j},x)+\sum_{j=1}^m\int_{\Omega_j}(G(y,x)-G(x_{n,j},x))f_n^*(y)\mathrm{d}y{,}
\end{aligned}
\end{equation}
and the estimates of $\tilde{u}_n$ in Theorem 2.A. We refer to \cite[Lemma 2.3]{ly2} and  \cite[Lemma 3.3]{BJLY} for further details. \end{proof}

\begin{lemma}
\label{le4.2}
\begin{equation*}
\begin{aligned}
\mathrm{L.H.S.~of}~\eqref{4.3}
=& -4A_{n,j}-\frac{256b_0e^{-\mu_{n,1}}h(q_j)e^{G_j^*(q_j)}}{\lambda_n(h(q_1))^2e^{G_1^*(q_1)}}
\int_{\Omega_j\setminus B_{r}(q_j)}e^{\Phi_j(x,\mathbf{q})}\mathrm{d}x\\ & +o(e^{-\frac{\mu_{n,j}}{2}}\sum_{l=1}^m|A_{n,l}|)+o(e^{-\mu_{n,j}}),
\end{aligned}
\end{equation*}
for fixed $r\in (0,r_0)$, with $r_0$ as defined right below \eqref{1.4}.
\end{lemma}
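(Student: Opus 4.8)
The plan is to collapse both boundary integrals on the left-hand side of \eqref{4.3} onto a single flux of $\nabla\zeta_n$ across $\partial B_r(x_{n,j})$, convert that flux into an interior integral by the divergence theorem, and then split off $A_{n,j}$ from the intermediate-region contribution, which will produce the $D$-type term. As a first step I would note that in \eqref{4.7} the regular parts cancel, $\widetilde G-\phi_{n,j}=\frac{\lambda_n}{m}\bigl(G(\cdot,x_{n,j})-R(\cdot,x_{n,j})\bigr)+\mathrm{const}=-\frac{\lambda_n}{2\pi m}\log|x-x_{n,j}|+\mathrm{const}$, so that on $\partial B_r(x_{n,j})$, with outer unit normal $\nu$, one has $\nabla v_{n,j}=-\frac{\lambda_n}{2\pi m}\,\frac{\nu}{r}+o(e^{-\mu_{n,j}/2})$. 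Feeding this radial leading term into the two integrands of \eqref{4.3}, the coefficient $-\frac{\lambda_n}{2\pi m}$ from the first integral and $+\frac{\lambda_n}{\pi m}$ from the second ($-2$-weighted) integral combine to $\frac{\lambda_n}{2\pi m}\int_{\partial B_r(x_{n,j})}\langle\nu,D\zeta_n\rangle\,\mathrm{d}\sigma$, while the $o(e^{-\mu_{n,j}/2})$ remainder of $\nabla v_{n,j}$, paired against $D\zeta_n$ and estimated via Lemma \ref{le3.2} (which on $\partial B_r(x_{n,j})$ gives $|D\zeta_n|\le C\sum_l|A_{n,l}|+O(e^{-\mu_{n,j}/2})$), contributes only $o(e^{-\mu_{n,j}/2}\sum_l|A_{n,l}|)+o(e^{-\mu_{n,j}})$, precisely the error allowed in the statement.

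Next, by \eqref{3.3} and the divergence theorem, $\int_{\partial B_r(x_{n,j})}\langle\nu,D\zeta_n\rangle\,\mathrm{d}\sigma=\int_{B_r(x_{n,j})}\Delta\zeta_n=-\int_{B_r(x_{n,j})}\lambda_n h e^{\tilde u_n}\zeta_n\,\mathrm{d}x$. Writing $B_r(x_{n,j})=\Omega_j\setminus\bigl(\Omega_j\setminus B_r(x_{n,j})\bigr)$ and recalling $A_{n,j}=\int_{\Omega_j}\lambda_n h e^{\tilde u_n}\zeta_n$, the left-hand side of \eqref{4.3} equals $-\frac{\lambda_n}{2\pi m}A_{n,j}+\frac{\lambda_n}{2\pi m}\int_{\Omega_j\setminus B_r(x_{n,j})}\lambda_n h e^{\tilde u_n}\zeta_n\,\mathrm{d}x$ up to the errors above. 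It then remains to evaluate the intermediate-region integral. On $\Omega_j\setminus B_r(x_{n,j})$ — for $n$ large a compact subset of $\overline\Omega\setminus\{q_j\}$ not meeting the other blow-up points — $\zeta_n\to-b_0$ uniformly by \eqref{farblow}, while inserting the estimates of Theorem 2.A (the $w_n$-estimate, $\rho_{n,j}\to 8\pi$, and the asymptotics of $\tilde u_{n,0}$ in item (iii)) into $\tilde u_n\approx\tilde u_{n,0}+\sum_l\rho_{n,l}G(\cdot,x_{n,l})$ and reassembling the Green terms yields $\lambda_n h(x)e^{\tilde u_n(x)}=\frac{64\,e^{-\mu_{n,j}}}{\lambda_n h(q_j)}\,e^{\Phi_j(x,\mathbf{q})}\bigl(1+o(1)\bigr)$ uniformly there. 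Hence that integral equals $-b_0\,\frac{64\,e^{-\mu_{n,j}}}{\lambda_n h(q_j)}\int_{\Omega_j\setminus B_r(q_j)}e^{\Phi_j(x,\mathbf{q})}\,\mathrm{d}x+o(e^{-\mu_{n,j}})$ (switching $B_r(x_{n,j})$ for $B_r(q_j)$ costs only $O(\mu_{n,j}e^{-2\mu_{n,j}})$).

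Finally I would tidy up the constants: $\lambda_n\to 8\pi m$ turns $\frac{\lambda_n}{2\pi m}\cdot\frac{64}{\lambda_n}$ into $\frac{256}{\lambda_n}\bigl(1+o(1)\bigr)$ and $\frac{\lambda_n}{2\pi m}A_{n,j}$ into $4A_{n,j}+o\bigl(e^{-\mu_{n,j}/2}|A_{n,j}|\bigr)$, and Theorem 2.A~(ii), $e^{\mu_{n,j}}h^2(x_{n,j})e^{G_j^*(x_{n,j})}=e^{\mu_{n,1}}h^2(x_{n,1})e^{G_1^*(x_{n,1})}\bigl(1+o(1)\bigr)$, gives $\frac{e^{-\mu_{n,j}}}{h(q_j)}=\frac{e^{-\mu_{n,1}}h(q_j)e^{G_j^*(q_j)}}{(h(q_1))^2 e^{G_1^*(q_1)}}\bigl(1+o(1)\bigr)$; the residual $o(1)$'s are absorbed into $o(e^{-\mu_{n,j}})$ and the expression takes exactly the asserted form.

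The first two stages are routine manipulations of \eqref{4.3}, \eqref{4.7}, \eqref{3.3} and Lemma \ref{le3.2}. The delicate point — the main obstacle — is the intermediate-region estimate: one must establish $\lambda_n h e^{\tilde u_n}=\frac{64\,e^{-\mu_{n,j}}}{\lambda_n h(q_j)}e^{\Phi_j}\bigl(1+o(1)\bigr)$ \emph{uniformly} on the whole of $\Omega_j\setminus B_r(x_{n,j})$, in particular up to $\partial\Omega$ (where $e^{\tilde u_n}$ decays) and up to $\partial\Omega_j$, and one must keep the cumulative contribution of the $o(1)$ factors, of $\rho_{n,j}-8\pi$, of $w_n$ and of the shift $x_{n,j}\leftrightarrow q_j$ strictly below $e^{-\mu_{n,j}}$, i.e. below the size of the $D$-type term that must be retained; this is exactly where the full strength of the sharp estimates of Theorem 2.A is used.
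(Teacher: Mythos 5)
Your proposal is correct, and it genuinely bypasses the machinery the paper sets up. Both arguments begin by using \eqref{4.7} to make $\nabla v_{n,j}$ radial and collapse the two boundary integrals in \eqref{4.3} into $4\int_{\partial B_r(x_{n,j})}\langle\nu, D\zeta_n\rangle\,\mathrm{d}\sigma$ up to the admissible errors (this is the common first stage, corresponding to the paper's \eqref{4.19}). From there the paper keeps the flux on $\partial B_r$, expands $\zeta_n$ through the refined Green representation \eqref{4.10} (introducing the second-order moments $C_{n,l,h,k}$, the function $\overline G_n$ in \eqref{4.16}, and the remainder $\zeta_n^*$ in \eqref{4.18}), and then evaluates the flux by explicit Green-function manipulations driven by the auxiliary Pohozaev identity \eqref{4.20}, with $r$- and $\bar r$-dependent errors. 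You instead apply the divergence theorem once, $4\int_{\partial B_r}\langle\nu,D\zeta_n\rangle = 4\int_{B_r}\Delta\zeta_n = -4\int_{B_r}f_n^*$, split the interior integral exactly as $\int_{B_r}f_n^*=A_{n,j}-\int_{\Omega_j\setminus B_r}f_n^*$, and feed the intermediate-region asymptotic \eqref{4.13} into the second piece. This makes the origin of the two displayed terms transparent: $-4A_{n,j}$ is literally the total weighted mass over $\Omega_j$ (with sign), and the $D$-type integral is exactly the contribution of the annular region $\Omega_j\setminus B_r$. Your route avoids the $C_{n,l,h,k}$ and $\overline G_n$, $\zeta_n^*$ apparatus entirely and has no $\bar r$-dependent error to dispose of; the cost, which you correctly identify, is that all the weight is then carried by the uniform validity of \eqref{4.13} on the full annulus $\Omega_j\setminus B_r(x_{n,j})$ up to $\partial\Omega$ and $\partial\Omega_j$, which forces you to invoke the full strength of Theorem 2.A (the $C^1$ bound on $w_n$, items (iii)--(v), and the shift $x_{n,j}\leftrightarrow q_j$) to keep the accumulated relative errors multiplied by $e^{-\mu_{n,j}}$ strictly below the retained $e^{-\mu_{n,j}}$ scale. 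Your bookkeeping of these contributions, including the $O(\mu_{n,j}e^{-2\mu_{n,j}})$ from the domain shift and the absorption of $\lambda_n/(2\pi m)-4=O(\mu_{n,1}e^{-\mu_{n,1}})$ and of Theorem 2.A (ii) for $e^{-\mu_{n,j}}/h(q_j)\leftrightarrow e^{-\mu_{n,1}}h(q_j)e^{G_j^*(q_j)}/(h^2(q_1)e^{G_1^*(q_1)})$, is consistent with the claimed $o(e^{-\mu_{n,j}/2}\sum_l|A_{n,l}|)+o(e^{-\mu_{n,j}})$.
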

\begin{proof} We shall first derive a refined estimate about  $\nabla\zeta_n$. By the Green representation formula, we see that,
\begin{equation}
\label{4.10}
\begin{aligned}
\zeta_n(x)-\zeta_{n,0}
=\int_{\Omega}G(y,x)f_n^*(y)\mathrm{d}y
&=\sum_{l=1}^mA_{n,l}G(x_{n,l},x)+\sum_{l=1}^m\sum_{h=1}^2B_{n,l,h}\left(\partial_{y_h}G(y,x)|_{y=x_{n,l}}\right)\\
&\quad+\frac{1}{2}\sum_{l=1}^m\sum_{h,k=1}^2C_{n,l,h,k}\left(\partial^2_{y_hy_k}G(y,x)|_{y=x_{n,l}}\right)\\
&\quad+\sum_{l=1}^m\int_{\Omega_l}\Psi_{n,l}(y,x)f_n^*(y)\mathrm{d}y,
\end{aligned}
\end{equation}
where
\begin{align*}
A_{n,l}=&\int_{\Omega_{l}}f_n^*(y)\mathrm{d}y,~B_{n,l,h}=\int_{B_{r_0}(x_{n,l})}(y-x_{n,l})_h f_n^*(y)\mathrm{d}{y},
\\\quad &{C_{n,l,h,k}}=\int_{B_{r_0}(x_{n,l})}(y-x_{n,l})_h(y-x_{n,l})_{k}f_n^*({y})\mathrm{d}{y},
\end{align*}
and
\begin{align*}
\Psi_{n,l}(y,x)=~&G(y,x)-G(x_{n,l},x)-\langle\partial_{y}G(y,x)|_{y=x_{n,l}},y-x_{n,l}\rangle\\
&-\frac{1}{2}\langle\partial^2_yG({y},{x})|_{y=x_{n,l}}(y-x_{n,l}),y-x_{n,l}\rangle.
\end{align*}

Fix $\bar r\in(0,\frac{r}{2})$. By using \eqref{farblow} and Theorem 2.A, we see that,
\begin{equation}
\begin{aligned}
\label{4.13}
f_n^*{(y)}=\frac{64 e^{-\mu_{n,j}}}{\lambda_nh(x_{n,j})}{e^{\Phi_j(y,\mathbf{x}_n)}}(-b_0+o(1))~\mathrm{for}~
y\in\ov{\om_j}\setminus B_{\bar r}(x_{n,j}),
\end{aligned}
\end{equation}
where $\mathbf{x}_n=(x_{n,1},\cdots,x_{n,m})$ and
$$\Phi_j(y,\mathbf{x}_n)=\sum_{l=1}^m 8\pi G(y,x_{n,l})-G_j^*(x_{n,j})+\log h(y)-\log h(x_{n,j}).$$

Let us define,
\begin{equation}
\label{4.16}
\begin{aligned}
\overline{G}_n(x)=~&\zeta_{n,0}+\sum_{l=1}^mA_{n,l}G(x_{n,l},x)
+\sum_{l=1}^m\sum_{h=1}^2B_{n,l,h}\partial_{y_h}G(y,x)|_{y=x_{n,l}}\\
&+\frac{1}{2}\sum_{l=1}^m\sum_{h,k=1}^2C_{n,l,h,k}\partial^2_{y_hy_k}G(y,x)|_{y=x_{n,l}},
\end{aligned}
\end{equation}
and
\begin{equation}
\label{4.18}
\zeta_n^*(x)=-b_0\sum_{l=1}^m\int_{\Omega_l\setminus B_{\bar r}(x_{n,l})}
\frac{64e^{-\mu_{n,l}}\Psi_{n,l}(y,x)}{\lambda_nh(x_{n,l})}e^{\Phi_l({y},\mathbf{x}_n)}\mathrm{d}y.
\end{equation}
By  \eqref{4.10} and \eqref{4.13}, we conclude that for $x\in\partial B_r(x_{n,j})$, it holds,
\begin{equation}
\label{4.17}
\begin{aligned}
 \zeta_n(x)-\overline{G}_n(x)
 &{
=} \zeta_n^*(x)
+O\left(\frac{\bar re^{-\mu_{n,j}}}{|x-x_{n,j}|^3}\right)  +o(e^{-\mu_{n,j}})~\mathrm{in}~C^1(\partial B_r(x_{n,j})).
\end{aligned}\end{equation}
Substituting \eqref{4.7} and \eqref{4.17} into \eqref{4.3}, we see that,
\begin{equation}
\label{4.19}
\begin{aligned}
\mathrm{L.H.S.~of}~\eqref{4.3}=~&\int_{\partial B_r(x_{n,j})}4\langle\nu,D(\overline{G}_n+\zeta_n^*)(x)\rangle\mathrm{d}\sigma(x)
+o(e^{-\frac{\mu_{n,j}}{2}}\sum_{l=1}^m|A_{n,l}|)\\
&+O\left(\frac{{\bar{r}}e^{-\mu_{n,j}}}{r^3}\right)+o(e^{-\mu_{n,j}}),
\end{aligned}
\end{equation}
for any $\bar r\in(0,\frac{r}{2}).$ The estimate about the right hand side of \eqref{4.19} depends on the following identity
\begin{equation}
\label{4.20}
\begin{aligned}
&\Delta u\left(\nabla v\cdot(x-x_{n,j})\right)+\Delta v\left(\nabla u\cdot(x-x_{n,j})\right)\\
&=\mathrm{div}\left(\nabla u(\nabla v\cdot(x-x_{n,j}))
+\nabla v(\nabla u\cdot(x-x_{n,j}))-\nabla u\cdot\nabla v(x-x_{n,j})\right).
\end{aligned}
\end{equation}
Letting $u=\overline{G}_n$ and $v=\widetilde{G}-\phi_{n,j}$ (resp. $u=\Psi_{n,j}(y,\cdot)$ and $v=\widetilde{G}-\phi_{n,j}$), 
then we are able to compute the right hand side of \eqref{4.19} 
$\int_{\partial B_r(x_{n,j})} \langle\nu,D \overline{G}_n \rangle\mathrm{d}\sigma $ 
(resp. $\int_{\partial B_r(x_{n,j})} \langle\nu,D \zeta_n^* \rangle\mathrm{d}\sigma$). See \cite{ly2} and \cite[Lemma 4.2]{BJLY} for further details.
\end{proof}

The following result is needed to estimate the right hand side of \eqref{4.3}.

\begin{lemma}
\label{le4.3}
\begin{equation*}
\begin{aligned}
(i)~\int_{\partial B_r(x_{n,j})}rf_n^*\mathrm{d}\sigma
=&-\frac{32\pi e^{-\mu_{n,1}}b_0h(q_j)e^{G_j^*(q_j)}(\Delta\log h(q_j))}{\lambda_n(h({q}_1))^2e^{G_1^*(q_1)}}\\
&-\frac{128e^{-\mu_{n,1}}b_0h(q_j)e^{G_j^*(q_j)}}{\lambda_n(h(q_1))^2e^{G_1^*(q_1)}}\frac{\pi}{r^2}
+O(re^{-\mu_{n,1}})+\frac{o(e^{-\mu_{n,1}})}{r^2}.
\end{aligned}
\end{equation*}
\begin{equation*}
(ii)~\sum_{j=1}^m\int_{B_r(x_{n,j})}f_n^*\mathrm{d}x
=\frac{64b_0e^{-\mu_{n,1}}}{\lambda_n(h(q_1))^2e^{G_1^*(q_1)}}\sum_{j=1}^m
\int_{\Omega_j\setminus B_r(q_j)}h(q_j)e^{G_j^*(q_j)}e^{\Phi_j(x,\mathbf{q})}\mathrm{d}x+o(e^{-\mu_{n,1}}).
\end{equation*}
\begin{equation*}
\begin{aligned}
(iii)~&\int_{B_r(x_{n,j})}f_n^*\langle D(\log h+\phi_{n,j}),x-x_{n,j}\rangle\mathrm{d}{x}\\
&=\Bigg[\frac{32\pi\zeta_{n,0}(\Delta\log h(q_j))}{\lambda_n(h(q_1))^2e^{G_{1}^*(q_1)}}h(q_j)e^{G_j^*(q_j)}e^{-\mu_{n,1}}\Bigg(\mu_{n,1}
+\log\Big(\frac{\lambda_n(h({q}_1))^2e^{G_1^*(q_1)}}{8h(q_j)e^{G_j^*(q_j)}}r^2\Big)-2\Bigg)\Bigg]\\
&\quad+O(1)(\frac{|\log r|e^{-\mu_{n,j}}}{(\mu_{n,j})^2})+O(re^{-\mu_{n,j}})+o(e^{-\mu_{n,j}})|\log R|+O\left(\frac{e^{-2\mu_{n,j}}}{r^2}\right)+O\left(\frac{e^{-\mu_{n,j}}}{R^2}\right)\\
&\quad+O(1)\Bigg(\sum_{l=1}^m(|A_{n,l}|+e^{-\frac{\mu_{n,j}}{2}})\Big(\frac{e^{-\frac{\mu_{n,j}}{2}}}{R}+e^{-\mu_{n,j}}
(\mu_{n,j}+|\log r|)\Big)\Bigg)~\mbox{for any}~R>1,
\end{aligned}
\end{equation*}
where $O(1)$  denotes any quantity uniformly bounded with respect to $r, R$ and $n$.
\end{lemma}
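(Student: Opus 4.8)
\emph{Sketch of proof.} The three identities are the analogues, for $f_n^*=\lambda_n h e^{\tilde u_n}\zeta_n$, of the bulk and boundary integrals computed in \cite{ly2} and \cite[Lemma~4.3]{BJLY}; the plan is to adapt those computations, the only genuinely new point being that here $\zeta_n$ plays the role of the difference of two solutions and, not obeying a homogeneous Dirichlet condition, carries its boundary value $\zeta_{n,0}=\zeta_n|_{\partial\Omega}$ along. Two facts will be used throughout. The first is the outer representation \eqref{4.13}, that is $f_n^*(y)=\frac{64 e^{-\mu_{n,j}}}{\lambda_n h(x_{n,j})}e^{\Phi_j(y,\mathbf x_n)}(-b_0+o(1))$ on $\overline\om_j\setminus B_{\bar r}(x_{n,j})$, which follows from Step~2 ($\zeta_n\to-b_0$) together with Theorem~2.A, combined with the identity
\begin{equation*}
\Phi_j(x,\mathbf q)=-4\log|x-q_j|+\big(G_j^*(x)-G_j^*(q_j)\big)+\big(\log h(x)-\log h(q_j)\big),
\end{equation*}
which exhibits $e^{\Phi_j}$ as $|x-q_j|^{-4}$ times a function smooth near $q_j$, and with the harmonicity of $R(\cdot,q_j)$ and of $G(\cdot,q_l)$, $l\neq j$, near $q_j$, so that $\operatorname{tr}D^2 G_j^*(q_j)=0=\operatorname{tr}D^2\phi_{n,j}(x_{n,j})$. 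The second is the inner rescaled convergence: with $z=e^{\mu_{n,j}/2}(x-x_{n,j})$, Theorem~2.A(i),(iii) gives $\overline{\lambda_n h e^{\tilde u_n}}(z)=\frac{8\pi m h(q_j)}{(1+\pi m h(q_j)|z|^2)^2}(1+o(1))$ on compacts, and \eqref{nearblow} gives $\overline{\zeta_n}(z)\to b_0\psi_{j,0}(z)+b_{j,1}\psi_{j,1}(z)+b_{j,2}\psi_{j,2}(z)$. Finally, Theorem~2.A(ii) will be used repeatedly in the form $e^{-\mu_{n,j}}=e^{-\mu_{n,1}}\,\frac{h^2(q_j)e^{G_j^*(q_j)}}{h^2(q_1)e^{G_1^*(q_1)}}\,(1+o(1))$ to pass from $\mu_{n,j}$ to $\mu_{n,1}$.

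For (i) I would restrict \eqref{4.13} to $\partial B_r(x_{n,j})$ and Taylor expand the smooth part of $\Phi_j$ to second order about $x_{n,j}$; since $|x_{n,j}-q_j|=O(\mu_{n,j}e^{-\mu_{n,j}})$ and $\nabla(\log h+G_j^*)(x_{n,j})=O(\mu_{n,j}e^{-\mu_{n,j}})$ by Theorem~2.A(iv), this is the same, up to admissible errors, as expanding about $q_j$. The constant part of $e^{\Phi_j}$ contributes $|x-q_j|^{-4}$; multiplied by $r$ and integrated over the circle it produces the $\pi/r^2$ term, after replacing $e^{-\mu_{n,j}}/h(x_{n,j})$ by $e^{-\mu_{n,1}}h(q_j)e^{G_j^*(q_j)}/(h^2(q_1)e^{G_1^*(q_1)})$. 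The linear Taylor term integrates to zero by oddness, and the quadratic term yields $\tfrac12\pi r^3\operatorname{tr}D^2(\log h+G_j^*)(q_j)=\tfrac12\pi r^3\,\Delta\log h(q_j)$, which is the first displayed term; the leftovers collect into $O(re^{-\mu_{n,1}})+o(e^{-\mu_{n,1}})/r^2$. For (ii) the key remark is $\int_\Omega f_n^*\,dx=0$ (equation \eqref{3.2}) together with $\overline\Omega=\bigcup_j\overline\om_j$, so $\sum_j\int_{B_r(x_{n,j})}f_n^*=-\sum_j\int_{\om_j\setminus B_r(x_{n,j})}f_n^*$; on each $\om_j\setminus B_r(x_{n,j})$ apply \eqref{4.13}, then let $\mathbf x_n\to\mathbf q$ by dominated convergence — on $\om_j$ the integrand is bounded by $C|x-q_j|^{-4}\,\mathbf 1_{\{|x-q_j|>r/2\}}$, integrable because $\om_j$ stays at distance $\ge 2r_0$ from the other $q_l$ — and convert $e^{-\mu_{n,j}}/h(x_{n,j})$ by Theorem~2.A(ii) exactly as in (i).

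Part (iii) is the main obstacle. I would split $B_r(x_{n,j})$ into the core $B_{Re^{-\mu_{n,j}/2}}(x_{n,j})$ and the annulus $B_r(x_{n,j})\setminus B_{Re^{-\mu_{n,j}/2}}(x_{n,j})$, with $R$ large and fixed. On the annulus use \eqref{4.13} together with the Green representation of Lemma~\ref{le3.2}, which makes the $A_{n,l}$-dependence of $\zeta_n$ explicit and accounts for the last displayed error line. On the core, rescale by $z=e^{\mu_{n,j}/2}(x-x_{n,j})$, write $\zeta_n=\zeta_{n,0}+(\text{rescaled profile})+O(\cdots)$ from Lemma~\ref{le3.2} (this is where $\zeta_{n,0}$ enters, consistently with $\zeta_{n,0}=-b_0+o(1)$), and Taylor expand $\langle D(\log h+\phi_{n,j}),x-x_{n,j}\rangle$ about $x_{n,j}$: by Theorem~2.A(iv),(vi) and $\lambda_n\to 8\pi m$ one has $D(\log h+\phi_{n,j})(x_{n,j})=O(\mu_{n,j}e^{-\mu_{n,j}})+O(\lambda_n-8\pi m)$ while $\operatorname{tr}D^2(\log h+\phi_{n,j})(x_{n,j})=\Delta\log h(x_{n,j})$ (since $\phi_{n,j}$ is harmonic near $x_{n,j}$), so the odd term drops and the quadratic term equals $\tfrac12 e^{-\mu_{n,j}}|z|^2\,\Delta\log h(q_j)(1+o(1))$ after reinserting $|x-x_{n,j}|^2=e^{-\mu_{n,j}}|z|^2$. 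Pairing this with $\overline{f_n^*}(z)\simeq \zeta_{n,0}\,\frac{8\pi m h(q_j)}{(1+\pi m h(q_j)|z|^2)^2}$ and the logarithmically divergent ``neck'' integral
\begin{equation*}
\int_{|z|<R_n}\frac{8\pi m h(q_j)\,|z|^2}{(1+\pi m h(q_j)|z|^2)^2}\,\d z=\frac{8}{m h(q_j)}\big(\log(\pi m h(q_j)R_n^2)+O(1)\big),\qquad R_n:=re^{\mu_{n,j}/2},
\end{equation*}
produces the factor $\mu_{n,1}+\log(\cdots r^2)-2$, and after converting $e^{-\mu_{n,j}}$ by Theorem~2.A(ii) one obtains the stated leading term, linear in $\zeta_{n,0}\,\Delta\log h(q_j)$.

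The remaining contributions all go into the error: the $b_{j,1}\psi_{j,1}+b_{j,2}\psi_{j,2}$ part of the profile is odd in $z$ and contributes only lower order terms; the mismatch of the two regions at $|x-x_{n,j}|=Re^{-\mu_{n,j}/2}$ gives the $O(e^{-\mu_{n,j}}/R^2)$ and $o(e^{-\mu_{n,j}})|\log R|$ terms; the annular integral and the $A_{n,l}$-part of $\zeta_n$ give the $O(1)\sum_l(|A_{n,l}|+e^{-\mu_{n,j}/2})(\cdots)$ line; and all $O(1)$'s must be checked to be uniform in $r$, $R$, $n$, exactly as in \cite[Lemma~4.3]{BJLY}. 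I expect the delicate points of part (iii) to be: (a) controlling the logarithmically divergent neck integral that carries the crucial $\mu_{n,1}$ factor; (b) using the sharpness of Theorem~2.A(iv),(vi) to see that $D(\log h+\phi_{n,j})$ fails to vanish at $x_{n,j}$ only to lower order — here $\mathbf q$ is a critical point of $f_m$, but $x_{n,j}$ is not — ; and (c) keeping every error term uniform in $r$ and $R$, which is what makes these estimates usable when they are fed back into \eqref{4.3}, \eqref{4.47} in Step~4.
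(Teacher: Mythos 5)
Your proposal follows essentially the same route as the paper: for (i)--(ii) it invokes Theorem~2.A, the outer expansion \eqref{4.13}, the identity $\int_\Omega f_n^*=0$, and the local-mass conversion 2.A(ii), while for (iii) it uses the same rescaled core/annulus decomposition, the Green representation of Lemma~\ref{le3.2} with the $A_{n,l}$-terms, the logarithmically divergent neck integral producing the $\mu_{n,1}$ factor, and the reconciliation $\zeta_{n,0}=-b_0+o(1)$. The only slips are expository and harmless: on the rescaled core the paper uses the limit profile \eqref{nearblow} (Baraket--Pacard) rather than Lemma~\ref{le3.2}, and the prefactor $\zeta_{n,0}$ in the leading term of (iii) actually arises from the annular region via \eqref{4.41} and \eqref{4.40} rather than from the core, consistent with $\zeta_{n,0}=-b_0+o(1)$.
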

\begin{proof} The proof of Lemma \ref{le4.3} is based on Theorem 2.A  and  $\int_{\Omega}f^*_ndy=0$ as in \cite{ly2,BJLY}. 
We sketch the proof of $(iii)$ for reader's convenience. By Theorem 2.A, the expansion of $D(\log h+\phi_{n,j})$, 
and  the scaling $x=e^{-\frac{\mu_{n,j}}{2}}z+x_{n,j}$,   we see that,
\begin{equation}
\label{4.39}
\begin{aligned}
&\int_{B_r(x_{n,j})}f_n^*\langle D(\log h+\phi_{n,j}),x-x_{n,j}\rangle\d x\\
&=\int_{B_{\Lambda_{n,j,r}^{+}}(0)}\frac{\lambda_n h(x_{n,j})\overline{\zeta_n}}{(1+\frac{\lambda_nh(x_{n,j})}{8}|z|^2)^2}
\langle D^2(\log h+\phi_{n,j})(x_{n,j})z,z\rangle e^{-\mu_{n,j}}\d z\\
&\quad+O(1)\left(\frac{|\log r|e^{-\mu_{n,j}}}{(\mu_{n,j})^2}\right)+O(re^{-\mu_{n,j}})+o(e^{-\mu_{n,j}}).
\end{aligned}\end{equation}
At this point, by using the following identity $\int\frac{(1-r^2)r^3}{(1+r^2)^3}\mathrm{d}r=\frac{1}{2}\left(\frac{-3r^2-2}{(1+r^2)^2}-\log(1+r^2)\right)+C,$
together with \eqref{nearblow} and \eqref{farblow}, then for any fixed and large $R>0$ we see that,
\begin{align*}
&\int_{B_R(0)}\frac{\lambda_nh(x_{n,j})\overline{\zeta_n}}{(1+\frac{\lambda_nh(x_{n,j})}{8}|z|^2)^2}
\langle D^2(\log h+\phi_{n,j})(x_{n,j})z,z\rangle e^{-\mu_{n,j}}\d z\\
&=\frac{32\pi b_0\Delta(\log h+\phi_{n,j})(x_{n,j})}{\lambda_n h(x_{n,j})}e^{-\mu_{n,j}}
\Big(\frac{\frac{\lambda_n h(x_{n,j})}{8}R^2(1+\frac{\lambda_n h(x_{n,j})}{4}R^2)}{(1+\frac{\lambda_n h(x_{n,j})}{8}R^2)^2}
-\log(1+\frac{\lambda_nh(x_{n,j})}{8}R^2)\Big)\\
&\quad+o(e^{-\mu_{n,j}})|\log R|.
\end{align*}
Next, let us observe that,
\begin{equation}
\label{4.40}
\int\frac{r^3}{(1+r^2)^2}\d r=\frac{1}{2}\Big(\frac{1}{1+r^2}+\log(1+r^2)\Big)+C.
\end{equation}
In view of \eqref{3.8}, we also see that if $|z|\ge R$ then it holds,
\begin{equation}
\label{4.41}
\overline{\zeta_n}(z)=\zeta_{n,0}+O(1)\Big(\sum_{l=1}^m(|A_{n,l}|+e^{-\frac{\mu_{n,j}}{2}})(\frac{e^{\frac{\mu_{n,j}}{2}}}{|z|}+1)\Big).
\end{equation}
Since $\zeta_{n,0}$ is constant, we also conclude from \eqref{4.40} and \eqref{4.41} that,
{\allowdisplaybreaks
\begin{align*}
&\int_{B_{\Lambda_{n,j,r}^{+}}(0)\setminus B_R(0)}\frac{\lambda_nh(x_{n,j})\overline{\zeta_n}}{(1+\frac{\lambda_nh(x_{n,j})}{8}|z|^2)^2}
\langle D^2(\log h+\phi_{n,j})(x_{n,j})z,z\rangle e^{-\mu_{n,j}}\d z
\\
=&-\left[\frac{1}{1+\frac{\lambda_nh(x_{n,j})}{8}R^2}+\log(1+\frac{\lambda_nh(x_{n,j})}{8}R^2)
-\mu_{n,j}-\log(\frac{\lambda_nh(x_{n,j})}{8}r^2)\right]\\
&\times\frac{32\pi\zeta_{n,0}\Delta(\log h+\phi_{n,j})(x_{n,j})}{\lambda_nh(x_{n,j})}e^{-\mu_{n,j}}+O(\frac{e^{-2\mu_{n,j}}}{r^2})\\&+O(1)\sum_{l=1}^m(|A_{n,l}|+e^{-\frac{\mu_{n,j}}{2}})\Big(\frac{e^{-\frac{\mu_{n,j}}{2}}}{R}+e^{-\mu_{n,j}}(\mu_{n,j}+|\log r|)\Big).
\end{align*}}
By  \eqref{farblow}, it is easy to check that,
\begin{equation}
\label{4.43}
\zeta_{n,0}=-b_0+o(1).
\end{equation}
The estimates \eqref{4.39}-\eqref{4.43} used together with Theorem 2.A conclude the proof of Lemma~\ref{le4.3}-(iii).
\end{proof}

Now we are able to prove that $b_{j,0}=0$ for all $j=1,\cdots,m$.

\begin{lemma}
\label{le4.4}$ $\\
$(i)$ $A_{n,j}=\int_{\Omega_j}f_n^*(y)\d y= o(e^{-\frac{\mu_{n,j}}{2}})$.\\
$(ii)$ $b_0=0$ and in particular $b_{j,0}=0$, $j=1,\cdots,m$.
\end{lemma}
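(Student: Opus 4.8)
The plan is to read off $(i)$ and $(ii)$ from the Pohozaev--type identity \eqref{4.3}, substituting the expansion of Lemma \ref{le4.2} for its left hand side and those of Lemma \ref{le4.3} for its right hand side, and exploiting the normalization $\int_\Omega f_n^*\,dx=0$ together with Theorem 2.A. Throughout I fix $r\in(0,r_0)$ and $R>1$ and send $n\to+\infty$ first. For $(i)$, I would rewrite the right hand side of \eqref{4.3} as $\int_{\partial B_r(x_{n,j})}rf_n^*\,d\sigma-2\int_{B_r(x_{n,j})}f_n^*\,dx-\int_{B_r(x_{n,j})}f_n^*\langle D(\log h+\phi_{n,j}),x-x_{n,j}\rangle\,dx$ and record, via \eqref{4.13}, that $\int_{B_r(x_{n,j})}f_n^*\,dx=A_{n,j}+O(e^{-\mu_{n,j}}/r^2)$. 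Plugging in Lemma \ref{le4.2} together with parts $(i)$ and $(iii)$ of Lemma \ref{le4.3}, the $-4A_{n,j}$ coming from Lemma \ref{le4.2} and the $-2A_{n,j}$ coming from $-2\int_{B_r}f_n^*\,dx$ combine to a single isolated $-2A_{n,j}$, while every other contribution is, for fixed $r$ and $R$, either $O(\mu_{n,1}e^{-\mu_{n,1}})$ (the worst such term being the $\mu_{n,1}e^{-\mu_{n,1}}$ term in part $(iii)$ of Lemma \ref{le4.3}, estimated via $|\zeta_{n,0}|\le1$) or of the form $o(1)\sum_{l}|A_{n,l}|$, the $o(1)$ being supplied by the decaying factors $e^{-\mu_{n,j}/2}$ of Lemma \ref{le4.2} and $e^{-\mu_{n,j}/2}/R$, $e^{-\mu_{n,j}}(\mu_{n,j}+|\log r|)$ of Lemma \ref{le4.3}. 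Taking absolute values, summing over $j=1,\dots,m$ and absorbing the $o(1)\sum_l|A_{n,l}|$ into the left hand side for $n$ large, one gets $\sum_j|A_{n,j}|=O(\mu_{n,1}e^{-\mu_{n,1}})$; since $\mu_{n,j}=\mu_{n,1}+O(1)$ by Theorem 2.A and $\mu_{n,1}e^{-\mu_{n,1}}=o(e^{-\mu_{n,1}/2})$, this is exactly $(i)$. Note this step uses no information on $b_0$.

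For $(ii)$ I would instead \emph{sum} \eqref{4.3} over $j=1,\dots,m$. Then $\sum_j A_{n,j}=\int_\Omega f_n^*\,dx=0$ removes the $A_{n,j}$'s from the left hand side, and by $(i)$ the error $o(e^{-\mu_{n,j}/2}\sum_l|A_{n,l}|)$ of Lemma \ref{le4.2} and all terms containing $|A_{n,l}|$ in Lemma \ref{le4.3} become $o(e^{-\mu_{n,1}})$. Using $\zeta_{n,0}=-b_0+o(1)$ (from \eqref{farblow}) and $\sum_j h(q_j)e^{G_j^*(q_j)}\Delta\log h(q_j)=\ell(\mathbf q)$, the summed left hand side is $O(e^{-\mu_{n,1}})$ for fixed $r$, whereas the summed right hand side carries, through part $(iii)$ of Lemma \ref{le4.3}, a term proportional to $b_0\,\ell(\mathbf q)\,\mu_{n,1}e^{-\mu_{n,1}}$ with nonzero constant, plus a remainder $O(e^{-\mu_{n,1}})$. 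Matching the coefficients of $\mu_{n,1}e^{-\mu_{n,1}}$ forces $b_0\,\ell(\mathbf q)=0$, so under hypothesis $(1)$ we immediately get $b_0=0$.

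Under hypothesis $(2)$ the $\mu_{n,1}e^{-\mu_{n,1}}$ term vanishes identically, so I would push the expansion one order further: dividing the summed identity by $e^{-\mu_{n,1}}$ and letting $n\to+\infty$ (so that $\lambda_n\to8\pi m$, $x_{n,j}\to q_j$, and $\int_{\Omega_j\setminus B_r(q_j)}e^{\Phi_j(x,\mathbf x_n)}\,dx\to\int_{\Omega_j\setminus B_r(q_j)}e^{\Phi_j(x,\mathbf q)}\,dx$), the $\pi r^{-2}$ singular pieces produced by Lemma \ref{le4.2} and by parts $(i)$ and $(ii)$ of Lemma \ref{le4.3} cancel; letting then $r\to0^+$, the surviving finite quantities reassemble, via the definition \eqref{1.5} — whose logarithmic divergences disappear precisely because $\ell(\mathbf q)=0$ — into $b_0$ times a nonzero multiple of $D(\mathbf q)$, whence $b_0 D(\mathbf q)=0$ and again $b_0=0$. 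In both cases $b_{j,0}=b_0=0$ for $j=1,\dots,m$ by \eqref{farblow}.

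The main obstacle is this last piece of bookkeeping: one has to juggle the cutoff scales $r$ and $R$ (plus the auxiliary radius $\bar r$) from Lemmas \ref{le4.2} and \ref{le4.3} together with the rescaled radius $r_j=r\sqrt{8h(q_j)e^{G_j^*(q_j)}}$ entering \eqref{1.5}, take the limits in the correct order ($n\to+\infty$ first, then $r\to0^+$ and $R\to+\infty$), and check that the $r^{-2}$ singularities cancel exactly so that what remains is genuinely the $D(\mathbf q)$ (respectively $\ell(\mathbf q)$) appearing in the hypotheses of Theorem \ref{th1.1}. Here the assumption that $\mathbf q$ is a critical point of $f_m$ is essential: it gives $\nabla(G_j^*+\log h)(q_j)=0$, hence $e^{\Phi_j(x,\mathbf q)}=|x-q_j|^{-4}(1+O(|x-q_j|^2))$, which is what makes $D(\mathbf q)$ (under $\ell(\mathbf q)=0$) and all the above reductions well defined.
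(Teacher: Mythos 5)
Your proposal is correct and follows essentially the same route as the paper: summing the Pohozaev identity \eqref{4.3} over $j$, invoking $\sum_j A_{n,j}=0$, and extracting $b_0\,\ell(\mathbf q)=0$ from the coefficient of $\mu_{n,1}e^{-\mu_{n,1}}$ in case (1), respectively $b_0\,D(\mathbf q)=0$ after sending $n\to\infty$, then $r\to 0^+$, in case (2), exactly as in the paper's \eqref{4.46}. The only notable difference is that you spell out the absorption argument behind part (i), which the paper dismisses as an "immediate consequence" of Theorem 2.A and Lemmas \ref{le4.2}--\ref{le4.3}; your reconstruction (single-$j$ identity gives $-2A_{n,j}=O(\mu_{n,1}e^{-\mu_{n,1}})+o(1)\sum_l|A_{n,l}|$, then sum and absorb) is the right way to fill that in.
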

\begin{proof}
$(i)$ This is an immediate consequence of Theorem 2.A and Lemmas \ref{le4.2}-\ref{le4.3}.

\noindent $(ii)$  For any $r>0$, let us set,
\begin{equation}
\label{4.45}
r_j=r\sqrt{8h(q_j)G_j(q_j)}\quad \mathrm{for}\quad j=1,\cdots,m.
\end{equation}
Note that $\sum_{j=1}^m A_{n,j}=0$. This fact, together with the Pohozaev type indentity \eqref{4.3}, Lemmas \ref{le4.2}-\ref{le4.3}, and $(i)$, implies that,
\begin{equation}
\label{4.46}
\begin{aligned}
&-\frac{256b_0e^{-\mu_{n,1}}}{\lambda_n(h(q_1))^2e^{G_1^*(q_1)}}\sum_{j=1}^mh(q_j)e^{G_j^*(q_j)}
\int_{\Omega_j\setminus B_{r_j}(q_j)}e^{\Phi_j(y,\mathbf{q})}\d y\\
&=-\frac{128e^{-\mu_{n,1}}b_0}{\lambda_n(h(q_1))^2e^{G_1^*(q_1)}}\sum_{j=1}^mh(q_j)e^{G_j^*(q_j)}\frac{\pi}{r_j^2}
-\frac{32\pi e^{-\mu_{n,1}}b_0\ell(\mathbf{q})}{\lambda_n(h(q_1))^2e^{G_1^*(q_1)}}\\
&\quad-\frac{128e^{-\mu_{n,1}}b_0}{\lambda_n(h(q_1))^2e^{G_1^*(q_1)}}\sum_{j=1}^mh(q_j)e^{G_j^*(q_j)}
\int_{\Omega_j\setminus B_{r_j}(q_j)}e^{\Phi_j(y,\mathbf{q})}\d y\\
&\quad-\frac{32\pi\zeta_{n,0}\ell(\mathbf{q})e^{-\mu_{n,1}}}{\lambda_n(h(q_1))^2e^{G_1^*(q_1)}}
\Big(\mu_{n,1}+\log\Big(\lambda_n(h(q_1))^2e^{{G_1^*}(q_1)}r^2\Big)-2\Big)\\
&\quad+O(e^{-\mu_{n,1}})(r+R^{-1})+o(e^{-\mu_{n,1}})(\frac{1}{r^2}+\log R).
\end{aligned}
\end{equation}
If  either $\ell(\mathbf{q})\neq0$ or $D(\mathbf{q})\neq0$, then \eqref{4.46} implies  $b_0=0$.
In view of \eqref{nearblow},  we also obtain $b_{j,0}=b_0=0$, $j=1,\cdots, m$. This fact concludes the proof of $(ii)$.
\end{proof}

Next, by using \eqref{4.47}, we shall  prove that $b_{j,1}=b_{j,2}=0$. 
\begin{lemma}
\label{le4.6}
\begin{equation*}
\mathrm{R.H.S.~of}~\eqref{4.47}=\tilde{B}_j\Big(\sum_{h=1}^2D^2_{hi}(\phi_{n,j}+\log h)(x_{n,j})e^{-\frac{\mu_{n,j}}{2}}b_{j,h}\Big)
+o(e^{-\frac{\mu_{n,j}}{2}}),
\end{equation*}
where $$\tilde{B}_j=4\sqrt{\frac{8}{\lambda_nh(x_{n,j})}}\int_{\mathbb{R}^2}\frac{|z|^2}{(1+|z|^2)^3}\mathrm{d} z.$$
\end{lemma}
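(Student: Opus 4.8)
The approach is to split the right-hand side of \eqref{4.47} as
$-\int_{\partial B_r(x_{n,j})}f_n^*\,e_i\,\d\sigma+\int_{B_r(x_{n,j})}f_n^*\,D_i(\phi_{n,j}+\log h)\,\d x$,
where $f_n^*=\lambda_nhe^{\tilde u_n}\zeta_n$ as in \eqref{3.1} and $e_i=\frac{(x-x_{n,j})_i}{|x-x_{n,j}|}$, and to show that the boundary term together with all but one piece of the bulk term are $o(e^{-\mu_{n,j}/2})$, while the surviving piece produces the asserted expression. Throughout I would use Theorem 2.A, the decomposition \eqref{2.5}, the scaling $x=e^{-\frac{\mu_{n,j}}{2}}z+x_{n,j}$ as in \eqref{sc}, and Lemmas \ref{le3.2} and \ref{le4.4}.

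First, the boundary term. Fix $r\in(0,r_0)$. By \eqref{2.5}, Theorem 2.A(i) and the explicit form \eqref{2.3} of $U_{n,j}$ (using $e^{\mu_{n,j}}r^2\to+\infty$), one has $\lambda_nh(x)e^{\tilde u_n(x)}=O(e^{-\mu_{n,j}})$ uniformly on $\partial B_r(x_{n,j})$, so with $|\zeta_n|\le1$ the boundary term is $O(e^{-\mu_{n,j}})=o(e^{-\mu_{n,j}/2})$. For the bulk term I would Taylor-expand
\[D_i(\phi_{n,j}+\log h)(x)=D_i(\phi_{n,j}+\log h)(x_{n,j})+\sum_{h=1}^2D^2_{hi}(\phi_{n,j}+\log h)(x_{n,j})(x-x_{n,j})_h+O(|x-x_{n,j}|^2).\]
The zeroth-order term contributes $D_i(\phi_{n,j}+\log h)(x_{n,j})\int_{B_r(x_{n,j})}f_n^*\,\d x$: by Theorem 2.A(iv),(vi) and $\lambda_n/m\to8\pi$ one has $D_i(\phi_{n,j}+\log h)(x_{n,j})=o(1)$, while $\int_{B_r(x_{n,j})}f_n^*\,\d x=o(e^{-\mu_{n,j}/2})$ (combine $\int_\Omega f_n^*\,\d x=0$, Lemma \ref{le4.4}(i), and the bound $f_n^*=o(e^{-\mu_{n,j}})$ on $\Omega_j\setminus B_{\bar r}(x_{n,j})$, which follows from \eqref{4.13} and $b_0=0$ of Lemma \ref{le4.4}), so this contribution is $o(e^{-\mu_{n,j}/2})$. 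The $O(|x-x_{n,j}|^2)$ remainder, after the scaling and Theorem 2.A, is bounded by $O\big(e^{-\mu_{n,j}}\int_0^{re^{\mu_{n,j}/2}}\frac{\rho^3\,\d\rho}{(1+c_n\rho^2)^2}\big)=O(\mu_{n,j}e^{-\mu_{n,j}})=o(e^{-\mu_{n,j}/2})$, where $c_n=\frac{\lambda_nh(x_{n,j})}{8}$.

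It therefore remains to evaluate $\sum_{h=1}^2D^2_{hi}(\phi_{n,j}+\log h)(x_{n,j})\int_{B_r(x_{n,j})}f_n^*(x-x_{n,j})_h\,\d x$, and since the Hessian is bounded everything reduces to the asymptotics of $\int_{B_r(x_{n,j})}f_n^*(x-x_{n,j})_h\,\d x$. Changing variables and using Theorem 2.A to write $\lambda_nh(x)e^{\tilde u_n(x)}=\lambda_nh(x_{n,j})\frac{e^{\mu_{n,j}}}{(1+c_n|z|^2)^2}(1+o(1))$ on $B_{\Lambda_{n,j,r}^{+}}(0)$, this integral equals $e^{-\mu_{n,j}/2}\int_{B_{\Lambda_{n,j,r}^{+}}(0)}\frac{\lambda_nh(x_{n,j})}{(1+c_n|z|^2)^2}\,\overline{\zeta_n}(z)\,z_h\,\d z\,(1+o(1))$. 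I would split the $z$-integral at a large fixed radius $R$: on $B_R(0)$ use \eqref{nearblow} together with $b_{j,0}=0$ to replace $\overline{\zeta_n}$ by $b_{j,1}\psi_{j,1}+b_{j,2}\psi_{j,2}$ up to $o(1)$; on $B_{\Lambda_{n,j,r}^{+}}(0)\setminus B_R(0)$ use the sharp pointwise bound \eqref{3.8} of Lemma \ref{le3.2} together with $\zeta_{n,0}=-b_0+o(1)=o(1)$ and $A_{n,l}=o(e^{-\mu_{n,l}/2})$ to see this portion is $O(R^{-1})+o_n(1)$. Letting $n\to+\infty$ and then $R\to+\infty$, parity in $z$ kills the cross terms and the odd remainders, and the elementary computation $\int_{\mathbb{R}^2}\frac{z_h^2}{(1+\pi mh(q_j)|z|^2)^3}\,\d z=\frac{1}{2(\pi mh(q_j))^2}\int_{\mathbb{R}^2}\frac{|z|^2}{(1+|z|^2)^3}\,\d z$ gives $\int_{B_r(x_{n,j})}f_n^*(x-x_{n,j})_h\,\d x=e^{-\mu_{n,j}/2}b_{j,h}\tilde B_j+o(e^{-\mu_{n,j}/2})$, which yields the claim.

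The main obstacle is exactly this last step: the scaled integral must be controlled out to the scale $\Lambda_{n,j,r}^{+}=re^{\mu_{n,j}/2}$, where \eqref{nearblow} (which is only $C^0_{\mathrm{loc}}$ convergence) is insufficient, so one must invoke the sharp decay estimate \eqref{3.8} and, crucially, the already established vanishing $b_0=b_{j,0}=0$ from Lemma \ref{le4.4} in order to kill the otherwise dangerous "constant part" $\zeta_{n,0}\approx-b_0$ of $\overline{\zeta_n}$ in the far region before taking the two-parameter limit in $n$ and then $R$. This part is an adaptation of the corresponding estimates in \cite{ly2} and \cite{BJLY}, to which I would refer for the routine details.
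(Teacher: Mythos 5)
Your proof is correct and takes essentially the same route as the paper's own (very terse) argument: Taylor expand $D_i(\phi_{n,j}+\log h)$ at $x_{n,j}$, dispatch the boundary term and the zeroth-order and quadratic remainder contributions as $o(e^{-\mu_{n,j}/2})$, and evaluate the surviving first-order contribution by rescaling, splitting at scale $R$, and invoking \eqref{nearblow}, the far-field estimate \eqref{3.8} and the previously established $b_{j,0}=0$ from Lemma \ref{le4.4}. The only minor deviation is that the paper records the sharper estimate $D_i(\phi_{n,j}+\log h)(x_{n,j})=O(\mu_{n,j}e^{-\mu_{n,j}})$ (its \eqref{4.50}, from criticality of $\mathbf q$ and Theorem 2.A(iv)) where you settle for $o(1)$; both suffice once $\int_{B_r(x_{n,j})}f_n^*\,\d x=o(e^{-\mu_{n,j}/2})$ is in hand.
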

\begin{proof} Since $\mathbf{q}$ is a critical point of $f_m$, then by using Theorem 2.A,  we find that,
\begin{equation}
\label{4.50}
\begin{aligned}
D_i(\phi_{n,j}+\log h)(x_{n,j})=D_i(G_j^*+\log h)(x_{n,j})+O(\mu_{n,j}e^{-\mu_{n,j}})
=O(\mu_{n,j}e^{-\mu_{n,j}}).
\end{aligned}
\end{equation}
As a consequence, in view of the blow up profile of $\tilde{u}_n$ in Theorem 2.A and \eqref{nearblow}, we conclude that,
\begin{equation}
\label{4.48}
\begin{aligned}
&\mathrm{R.H.S.~of}~\eqref{4.47}\\&=-\int_{\partial B_r(x_{n,j})}\lambda_nh(x)e^{\tilde{u}_n}\zeta_n\frac{(x-x_{n,j})_i}{|x-x_{n,j}|}\mathrm{d}\sigma+\int_{B_r(x_{n,j})}\lambda_nh(x)e^{\tilde{u}_n}\zeta_nD_i(\phi_{n,j}+\log h)\d x
\\
&=4\sqrt{\frac{8}{\lambda_nh(x_{n,j})}}\int_{\mathbb{R}^2}\frac{|z|^2}{(1+|z|^2)^3}\d z\Big( \sum_{h=1}^2D^2_{hi}(\phi_{n,j}+\log h)(x_{n,j})e^{-\frac{\mu_{n,j}}{2}}b_{j,h}\Big)+o(e^{-\frac{\mu_{n,j}}{2}}),
\end{aligned}
\end{equation}which implies that the conclusion of Lemma \ref{le4.6} holds.
\end{proof}

\begin{lemma}
\label{le4.7}
\begin{equation*}
\begin{aligned}
\mathrm{L.H.S.~of}~\eqref{4.47}
=&-8\pi\Bigg[\sum_{l\neq j}e^{-\frac{\mu_{n,l}}{2}}D_iG_{n,l}^*(x_{n,j})+ e^{-\frac{\mu_{n,j}}{2}}D_i\sum_{h=1}^2\partial_{y_h}R(y,x)|_{x=y=x_{n,j}}b_{j,h}\tilde B_j\Bigg]\\
&+o(e^{-\frac{\mu_{n,j}}{2}}),
\end{aligned}
\end{equation*}
where $G_{n,l}^*(x)=\sum_{h=1}^2{\partial_{y_h}}G(y,x)|_{y=x_{n,l}}b_{l,h}\tilde{B}_l.$
\end{lemma}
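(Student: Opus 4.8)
The statement of Lemma~\ref{le4.7} asserts an expansion for the left hand side of the second Pohozaev-type identity \eqref{4.47}, namely for the boundary terms
$$
\int_{\partial B_r(x_{n,j})}\langle\nu, D\zeta_n\rangle D_iv_{n,j}+\langle\nu, Dv_{n,j}\rangle D_i\zeta_n\,\d\sigma
-\int_{\partial B_r(x_{n,j})}\langle D v_{n,j},D\zeta_n\rangle\frac{(x-x_{n,j})_i}{|x-x_{n,j}|}\,\d\sigma.
$$
The strategy parallels the treatment of the first identity in Lemma~\ref{le4.2}: replace $\zeta_n$ on $\partial B_r(x_{n,j})$ by the finite-dimensional approximation $\overline{G}_n$ from \eqref{4.16} up to the controlled error \eqref{4.17}, and replace $\nabla v_{n,j}$ by its leading singular part $-4\frac{x-x_{n,j}}{|x-x_{n,j}|^2}$ using \eqref{4.7}. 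First I would substitute these two expansions into the three boundary integrals and collect the terms according to which coefficient ($A_{n,l}$, $B_{n,l,h}$, $C_{n,l,h,k}$) they multiply. Since we have already shown $A_{n,j}=o(e^{-\mu_{n,j}/2})$ in Lemma~\ref{le4.4}(i) and that the $C$-terms are of lower order, the only surviving contributions come from the $B_{n,l,h}$ pieces of $\overline{G}_n$, which by definition carry the factor $e^{-\frac12\mu_{n,l}}b_{l,h}\tilde{B}_l$.

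The key computational device is the divergence identity \eqref{4.20} (equivalently the one displayed after \eqref{4.47} with $e_l=\frac{(x-x_{n,j})_l}{|x-x_{n,j}|}$): applied with $v=\widetilde{G}-\phi_{n,j}$ and $u$ equal to each harmonic building block $\partial_{y_h}G(y,\cdot)|_{y=x_{n,l}}$, it turns the boundary integrals over $\partial B_r(x_{n,j})$ into a sum of a residue at $x_{n,j}$ plus smooth remainders. The $l=j$ term produces the self-interaction $\partial_{y_h}R(y,x)|_{x=y=x_{n,j}}$ (the singular part of $G$ cancels against the $-4\frac{x-x_{n,j}}{|x-x_{n,j}|^2}$ leading term of $\nabla v_{n,j}$, leaving the regular part $R$), while the terms $l\neq j$ give $D_iG(x,x_{n,l})$ evaluated at $x_{n,j}$, which is exactly $D_iG_{n,l}^*(x_{n,j})$ in the notation of the statement once the factor $b_{l,h}\tilde B_l$ is restored. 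Grinding out the constant: the $-8\pi$ prefactor arises from combining the $-4$ in \eqref{4.7} with the $2\pi$ from integrating the angular variable on $\partial B_r$ together with the normalization of $\tilde B_j$; this is the same bookkeeping as in \cite[Lemma~4.2]{BJLY} and I would invoke that reference rather than redo it in full.

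The main obstacle is controlling the error terms uniformly in $r$ and $n$ so that everything not explicitly written collapses into a single $o(e^{-\mu_{n,j}/2})$. Concretely one must check that: (a) the replacement error \eqref{4.17}, which is $O(\bar r e^{-\mu_{n,j}}/|x-x_{n,j}|^3)+o(e^{-\mu_{n,j}})$ on $\partial B_r(x_{n,j})$, contributes $o(e^{-\mu_{n,j}/2})$ after integration against $\nabla v_{n,j}=O(1/r)$ — this forces a judicious choice of $\bar r$ relative to $r$; (b) the $o(e^{-\mu_{n,j}/2}\sum_l|A_{n,l}|)$ terms are genuinely negligible, which is where Lemma~\ref{le4.4}(i) is essential; and (c) the cross terms between $D\zeta_n^*$ (the contribution of $\zeta_n^*$ from \eqref{4.18}) and $Dv_{n,j}$ do not produce a term of size comparable to $e^{-\mu_{n,j}/2}$ — since $\zeta_n^*$ is built from $e^{-\mu_{n,l}}$-weighted integrals it is of order $e^{-\mu_{n,j}}$ times a bounded quantity, hence absorbed. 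Once these estimates are in place the stated expansion follows by matching coefficients, and I would present it as a streamlined adaptation of \cite{ly2} and \cite[Lemma~4.2]{BJLY}, spelling out only the self-interaction residue and the $l\neq j$ residue since those are the terms carrying the geometric information $D_i(G_j^*+\log h)(x_{n,j})$ that will feed into the final contradiction.
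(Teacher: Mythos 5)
Your proposal is correct and takes essentially the same route as the paper: approximate $\zeta_n$ on the annulus by a superposition of derivatives of Green's functions with coefficients carrying the $b_{l,h}\tilde B_l$ factors, approximate $\nabla v_{n,j}$ by its singular part $-4\frac{x-x_{n,j}}{|x-x_{n,j}|^2}$ via \eqref{4.7}, use Lemma~\ref{le4.4}$(i)$ to kill the $A_{n,l}$-contributions, and evaluate the resulting boundary integral by a scale-invariance/residue argument that isolates $\partial_{y_h}R$ at $l=j$ and $D_iG_{n,l}^*(x_{n,j})$ at $l\neq j$. Two minor remarks: the paper works with the coarser $C^1$ expansion \eqref{4.53} (from Lemma~\ref{le3.2} plus Lemma~\ref{le4.4}) rather than your finer $\overline G_n$/$\zeta_n^*$ decomposition \eqref{4.16}--\eqref{4.18}, which for this lemma is more than is needed; and the mechanism by which the $l=j$ singular part drops out is not a cancellation against $\nabla v_{n,j}$, but rather that $\int_{\partial B_\theta}\frac{D_i D_h\log|x-x_{n,j}|}{|x-x_{n,j}|}\,\d\sigma=0$ by the angular identity $D_iD_h\log|z|=\tfrac{\delta_{ih}|z|^2-2z_iz_h}{|z|^4}$, so only the regular part $R$ survives with the factor $2\pi$, giving the $-8\pi$ after multiplying by the $-4$ from \eqref{4.7}.
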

\begin{proof}
By the definition of $G_{n,i}^*$, we have for any $\theta\in(0,r)$,
$$\Delta G_{n,l}^*=0~\mathrm{in}~B_r(x_{n,j})\setminus B_{\theta}(x_{n,j}).$$
Then for $x\in B_r(x_{n,j})\setminus B_{\theta}(x_{n,j})$, and setting $e_i=\frac{x_i}{|x|}$, $i=1,2$, we have,
\begin{equation*}
\begin{aligned}
0=~&\Delta G_{n,l}^*D_i\log\frac{1}{|x-x_{n,j}|}+\Delta\log\frac{1}{|x-x_{n,j}|}D_iG_{n,l}^*\\
=~&\mathrm{div}\Bigg(\nabla G_{n,l}^*D_i\log\frac{1}{|x-x_{n,j}|}+\nabla\log\frac{1}{|x-x_{n,j}|}D_iG_{n,l}^*
-\nabla G_{n,l}^*\cdot\nabla\log\frac{1}{|x-x_{n,j}|}e_i\Bigg),
\end{aligned}
\end{equation*}
which readily implies that,
\begin{equation}
\label{4.52}
\begin{aligned}
\int_{\partial B_r(x_{n,j})}\frac{\nabla_iG_{n,l}^*}{|x-x_{n,j}|}\d\sigma
&=\int_{\partial B_\theta(x_{n,j})}\frac{\nabla_iG_{n,l}^*}{|x-x_{n,j}|}\mathrm{d}\sigma.
\end{aligned}
\end{equation}
In view of  Lemma \ref{le3.2} and Lemma \ref{le4.4}, we also have,
\begin{equation}
\label{4.53}
\zeta_n(x)-\zeta_{n,0}=\sum_{l=1}^me^{-\frac{\mu_{n,l}}{2}}G_{n,l}^*(x) +o(e^{-\frac{\mu_{n,j}}{2}})\quad\mathrm{in}\quad C^1(B_r(x_{n,j})\setminus B_{\theta}(x_{n,j})).
\end{equation}
By using $D_iD_h(\log|z|)=\frac{\delta_{ih}|z|^2-2z_iz_h}{|z|^4},$ we see that,
\begin{equation*}
\begin{aligned}
\int_{\partial B_{\theta}(x_{n,j})}\frac{\nabla_iG_{n,j}^*}{|x-x_{n,j}|}\mathrm{d}\sigma
=2\pi D_i\sum_{h=1}^2\partial_{y_h}R(y,x)|_{x=y=x_{n,j}}b_{j,h}\tilde{B}_j+o_\theta(1).
\end{aligned}
\end{equation*}
This fact, together with \eqref{4.52}-\eqref{4.53} and  the estimate about $\nabla v_{n,j}$ in \eqref{4.7},  we see that, for any $\theta\in(0,r)$,
\begin{equation*}
\begin{aligned}
\mathrm{L.H.S.~ of}~\eqref{4.47}=&-4\int_{\partial B_r(x_{n,j})}
\sum_{l=1}^me^{-\frac{\mu_{n,l}}{2}}\frac{\nabla_iG_{n,l}^*}{|x-x_{n,j}|}\d\sigma+o(e^{-\frac{\mu_{n,j}}{2}})\\
=&-8\pi\Bigg[\sum_{l\neq j} e^{-\frac{\mu_{n,l}}{2}}D_iG_{n,l}^*(x_{n,j})
+e^{-\frac{\mu_{n,j}}{2}}D_i\sum_{h=1}^2\partial_{y_h}R({y},{x})|_{x=y=x_{n,j}}b_{j,h}\tilde{B}_j\Bigg]\\
&+o(e^{-\frac{\mu_{n,j}}{2}})+o_\theta(1) e^{-\frac{\mu_{n,j}}{2}},
\end{aligned}
\end{equation*}
which proves the claim of Lemma \ref{le4.7}.
\end{proof}

Finally, we have the following,
\begin{lemma}
\label{le4.8}
$b_{j,1}=b_{j,2}=0$, $j=1,\cdots, m$.
\end{lemma}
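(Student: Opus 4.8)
The plan is to combine the two expansions of Lemmas~\ref{le4.6} and \ref{le4.7} through the Pohozaev-type identity \eqref{4.47} and to read off a homogeneous linear system for the $2m$ unknowns $b_{j,h}$, $j=1,\dots,m$, $h=1,2$, whose coefficient matrix $M$ will turn out to be, up to conjugation by invertible diagonal matrices, the Hamiltonian Hessian $D_{\Omega}^2 f_m(\mathbf{q})$. Since $\det(D_{\Omega}^2 f_m(\mathbf{q}))\neq0$ by hypothesis, $M$ is invertible and the system will force all the $b_{j,h}$ to vanish.

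More precisely, for fixed $i\in\{1,2\}$ and $j\in\{1,\dots,m\}$ I would set the left hand side of \eqref{4.47} equal to its right hand side, inserting Lemma~\ref{le4.7} for the former and Lemma~\ref{le4.6} for the latter. Both are of order $e^{-\mu_{n,j}/2}$, so dividing by $e^{-\mu_{n,j}/2}$ and letting $n\to+\infty$ gives a relation $\sum_{l=1}^{m}\sum_{h=1}^{2}M_{(j,i),(l,h)}\,b_{l,h}=0$. In this limit one uses the estimate $e^{\mu_{n,l}}h^2(x_{n,l})e^{G_l^*(x_{n,l})}=e^{\mu_{n,j}}h^2(x_{n,j})e^{G_j^*(x_{n,j})}(1+o(1))$ of Theorem~2.A (which gives $\lim_n e^{-\mu_{n,l}/2}/e^{-\mu_{n,j}/2}=h(q_l)e^{G_l^*(q_l)/2}/(h(q_j)e^{G_j^*(q_j)/2})$), together with $\lambda_n/m\to 8\pi$, $x_{n,l}\to q_l$, $\tilde B_l\to\tilde B_l^{\infty}>0$, and Lemma~\ref{le4.4}(i), which already absorbed the $A_{n,l}$-contributions in Lemma~\ref{le4.7}. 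The diagonal block $M_{(j,\cdot),(j,\cdot)}$ collects the self-interaction terms $-8\pi\,\tilde B_j^{\infty}\,\partial_{x_i}\partial_{y_h}R(y,x)|_{x=y=q_j}$ from Lemma~\ref{le4.7} together with the limit $\tilde B_j^{\infty}[8\pi\partial_{y_i}\partial_{y_h}R(q_j,q_j)+8\pi\sum_{l\neq j}\partial_{y_i}\partial_{y_h}G(q_j,q_l)+\partial_{y_i}\partial_{y_h}\log h(q_j)]$ of $\tilde B_j D^2_{hi}(\phi_{n,j}+\log h)(x_{n,j})$ from Lemma~\ref{le4.6}, while the off-diagonal blocks $M_{(j,\cdot),(l,\cdot)}$, $l\neq j$, come from $-8\pi\sum_{l\neq j}e^{-\mu_{n,l}/2}D_iG_{n,l}^{*}(x_{n,j})$ and involve $\partial_{x_i}\partial_{y_h}G(q_l,q_j)$.

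The crucial step is then the identification of $M$ with $D_{\Omega}^2 f_m(\mathbf{q})$. Here I would use the symmetry $R(y,x)=R(x,y)$ (hence that of $G$) to reorganise the mixed second derivatives: at the diagonal $\partial_{x_i}\partial_{y_h}R|_{(q_j,q_j)}$ agrees with the mixed term $\partial_{y_i}\partial_{x_h}R|_{(q_j,q_j)}$ appearing in $D^2\gamma_{\sscp\om}(q_j)$, and the identities $G_j^*(x)=8\pi R(x,q_j)+8\pi\sum_{l\neq j}G(x,q_l)$ and $\phi_{n,j}(y)=\frac{\lambda_n}{m}[(R(y,x_{n,j})-R(x_{n,j},x_{n,j}))+\sum_{l\neq j}(G(y,x_{n,l})-G(x_{n,j},x_{n,l}))]$ reproduce exactly the remaining entries of $[D_{\Omega}^2 f_m(\mathbf{q})]_{jj}$ and $[D_{\Omega}^2 f_m(\mathbf{q})]_{jl}$; recall also that the hypothesis that $\mathbf{q}$ is a critical point of $f_m$ has already entered, being equivalent to $\nabla[\log h+G_j^*](q_j)=0$, the $j$-th block of $\nabla f_m(\mathbf{q})$, consistently with Theorem~2.A(iv). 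One checks in this way that $M_{(j,\cdot),(l,\cdot)}=-\,a_j\,c_l\,[D_{\Omega}^2 f_m(\mathbf{q})]_{jl}$ for strictly positive scalars $a_j$ (a row factor $\propto (h(q_j)e^{G_j^*(q_j)/2})^{-1}$) and $c_l$ (a column factor $\propto h(q_l)e^{G_l^*(q_l)/2}\tilde B_l^{\infty}$), i.e. $M=-\,\mathrm{diag}(a_jI_2)\,D_{\Omega}^2 f_m(\mathbf{q})\,\mathrm{diag}(c_lI_2)$. Hence $\det M=(\prod_{j=1}^{m}a_j^2c_j^2)\det(D_{\Omega}^2 f_m(\mathbf{q}))\neq0$, so the linear system only has the trivial solution, that is $b_{j,1}=b_{j,2}=0$ for every $j=1,\dots,m$.

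The main obstacle, as expected, is precisely this identification of $M$: one has to keep careful track of which argument of $R$ and $G$ each derivative acts on, match the combinatorial factors produced by $f_m$, and use the symmetry of the Green function to collapse the extra $\partial_x\partial_y R$ term onto the Hessian of the Robin function. The passage to the limit is itself legitimate because all the error terms in Lemmas~\ref{le4.2}--\ref{le4.3} and \ref{le4.6}--\ref{le4.7} are uniform in the auxiliary parameters $r,R$ and in $n$.
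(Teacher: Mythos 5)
Your proposal is correct and follows essentially the same route as the paper: combine Lemmas~\ref{le4.6} and~\ref{le4.7} via the Pohozaev identity~\eqref{4.47}, divide by $e^{-\mu_{n,j}/2}$, normalise the ratios $e^{(\mu_{n,j}-\mu_{n,l})/2}$ with Theorem~2.A(ii), pass to the limit, recognise the resulting coefficient matrix as $D_\Omega^2 f_m(\mathbf q)$ up to positive diagonal factors, and invoke the non\-degeneracy assumption. The paper absorbs the diagonal factors into a rescaled vector $\vec b$ rather than into the matrix $M$ as you do, but these are the same bookkeeping; your identification of the row/column factors is consistent with the paper's definitions of $\tilde B_l$ and $\hat b_{lh}$.
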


\begin{proof}
From the Pohozaev type indentitly \eqref{4.47} and Lemma \ref{le4.6}-Lemma \ref{le4.7}, we have, for $i=1,2$,
\begin{equation}
\label{4.55}
\begin{aligned}
&\tilde{B}_j\sum_{h=1}^2(D_{hi}^2(\phi_{n,j}+\log h)(x_{n,j})b_{j,h})e^{-\frac{\mu_{n,j}}{2}}\\
&=-8\pi\Bigg[\sum_{l\neq j}e^{-\frac{\mu_{n,l}}{2}}D_iG_{n,l}^*(x_{n,j})
+e^{-\frac{\mu_{n,j}}{2}}D_i\sum_{h=1}^2\partial_{y_h}R(y,x)|_{y=y=x_{n,j}}b_{j,h}\tilde{B}_j\Bigg]+o(e^{-\frac{\mu_{n,j}}{2}})\\
&=-8\pi\sum_{l\neq j} e^{-\frac{\mu_{n,l}}{2}}\sum_{h=1}^2D_{x_i}\partial_{y_h}G(y,x)|_{(y,x)=(x_{n,j},x_{n,l})}b_{lh}\tilde{B}_l\\
&\quad-8\pi e^{-\frac{\mu_{n,j}}{2}}\sum_{h=1}^2D_{x_i}\partial_{y_h}R(y,x)|_{x=y=x_{n,j}}b_{j,h}\tilde{B}_j+o(e^{-\frac{\mu_{n,j}}{2}}).
\end{aligned}
\end{equation}
Set $\vec{b}=({\hat{b}}_{1,1}\tilde{B}_1,{\hat{b}}_{1,2}\tilde{B}_1,\cdots,{\hat{b}}_{m,1}\tilde{B}_m,{\hat{b}}_{m,2}\tilde{B}_m)$,
where $\hat{b}_{lh}=\lim_{n\to+\infty}(e^{\frac{\mu_{n,j}-\mu_{n,l}}{2}}b_{lh})$.
Then, by using Theorem 2.A and passing to the limit as $n\to +\infty$, we conclude from \eqref{4.55} that,
$$
D^2f_m({q}_1,{q}_2,\cdots,{q}_m) \cdot \vec{b}= 0.
$$
By the non degeneracy assumption $\mathrm{det}(D_{\Omega}^2f_m(\mathbf{q}))\neq 0$, we  conclude $b_{j,1}=b_{j,2}=0$, $j=1,\cdots, m$.
\end{proof}
\bigskip

\end{document}